\documentclass[a4paper]{amsart}
\usepackage{amsthm,amsmath,amssymb,mathrsfs}
\usepackage[latin1]{inputenc}
\usepackage[british]{babel}
\usepackage{mathrsfs}
\usepackage{url}
\usepackage{mathtools}

\usepackage{tikz} 
\usetikzlibrary{matrix,arrows,calc,decorations.pathreplacing,fit}
\usetikzlibrary{fadings}
\usepackage{tikz-cd}

\setcounter{secnumdepth}{3}
\setcounter{tocdepth}{1}

\numberwithin{equation}{section}

\newtheorem{theorem}[equation]{Theorem}
\newtheorem{lemma}[equation]{Lemma}

\newtheorem{proposition}[equation]{Proposition}

\theoremstyle{definition}

\theoremstyle{remark}
\newtheorem{remark}[equation]{Remark}









\DeclareMathOperator{\cha}{char}

\DeclareMathOperator{\Gal}{Gal}

\DeclareMathOperator{\Isom}{Isom}


\def \mono  {\hookrightarrow}

\def \epi   {\twoheadrightarrow}

\def \bij   {\xrightarrow{1:1}}

\makeatletter
\newcommand*{\relrelbarsep}{.386ex}
\newcommand*{\relrelbar}{%
  \mathrel{%
    \mathpalette\@relrelbar\relrelbarsep
  }%
}
\newcommand*{\@relrelbar}[2]{%
  \raise#2\hbox to 0pt{$\m@th#1\relbar$\hss}%
  \lower#2\hbox{$\m@th#1\relbar$}%
}
\providecommand*{\rightrightarrowsfill@}{%
  \arrowfill@\relrelbar\relrelbar\rightrightarrows
}
\providecommand*{\xrightrightarrows}[2][]{%
  \ext@arrow 0359\rightrightarrowsfill@{#1}{#2}%
}


\newcommand{\restr}[2]{{#1}\raise-.5ex\hbox{\ensuremath|}_{#2}}     




\newcounter{subenvcounter}
\newenvironment{subenv}{%
 \begin{list}
  {\em (\arabic{subenvcounter})}
  {\setlength{\leftmargin}{20pt}
   \setlength{\rightmargin}{0pt}
   \setlength{\itemindent}{0pt}
   \setlength{\labelsep}{5pt}
   \setlength{\labelwidth}{13pt}
   \setlength{\listparindent}{\parindent}
   \setlength{\parsep}{0pt}
   \setlength{\itemsep}{0pt}
   \setlength{\topsep}{-\parskip}
   \usecounter{subenvcounter}}}
  {\end{list}}

\newcounter{asslistcounter}
\newenvironment{assertionlist}{
 \begin{list}
  {\upshape (\alph{asslistcounter})}
  {\setlength{\leftmargin}{18pt}
   \setlength{\rightmargin}{0pt}
   \setlength{\itemindent}{0pt}
   \setlength{\labelsep}{5pt}
   \setlength{\labelwidth}{13pt}
   \setlength{\listparindent}{\parindent}
   \setlength{\parsep}{0pt}
   \setlength{\itemsep}{0pt}
   \setlength{\topsep}{-.5\parskip}
   \usecounter{asslistcounter}}}
  {\end{list}}


\def \NN {\mathbb{N}}

\def \QQ {\mathbb{Q}}
\def \RR {\mathbb{R}}

\def \ZZ {\mathbb{Z}}

\def \Acal {\mathcal{A}}
\def \Bcal {\mathcal{B}}

\def \Mcal {\mathcal{M}}

\def \afr {\mathfrak{a}}

\def \Gscr {\mathscr{G}}

\def \hbar {\bar{h}}

\def \Wtilde {\tilde{W}}
\def \Xtilde {\tilde{X}}

\author{Paul Hamacher and Eva Viehmann}
\address{Technische Universit\"at M\"unchen\\
Fakult\"at f\"ur Mathematik - M11 \\ Boltzmannstr. 3\\
85748 Garching bei M\"unchen\\
Germany}
\email{hamacher@ma.tum.de, viehmann@ma.tum.de}
\date{}
\thanks{The authors were partially supported by ERC Consolidator Grant 770936:\ NewtonStrat.}

\title{Finiteness properties of affine Deligne-Lusztig varieties}
\def \Flag {{\mathscr{F}\hspace{-0.5mm}\ell}}
\def \unif {{\epsilon}}

\newcommand{\cl}[1]{\mkern 1.5mu\overline{\mkern-1.5mu#1\mkern-1.5mu}\mkern 1.5mu}

\begin{document}
 \begin{abstract}
  {Affine Deligne-Lusztig varieties are closely related to the special fibre of Newton strata in the reduction of Shimura varieties or of moduli spaces of $G$-shtukas. In almost all cases, they are not quasi-compact. In this note we prove basic finiteness properties of affine Deligne-Lusztig varieties under minimal assumptions on the associated group. We show that affine Deligne-Lusztig varieties are locally of finite type, and prove a global finiteness result related to the natural group action. Similar results have previously been known for special situations.}
 \end{abstract}
 \maketitle

 \section{Introduction}

 Let $F$ be a local field, $O_F$ its ring of integers, and $k_F=\mathbb{F}_q$ its residue field, a finite field of characteristic $p$. We denote by $L$ the completion of the maximal unramified extension of $F$, and by $O_L$ its ring of integers. Then the residue field $k$ of $L$ is an algebraic closure of $\mathbb{F}_q$. We denote by $\unif$ a uniformizer of $F$, which is then also a uniformizer of $L$. Let $\sigma$ be the Frobenius of $k$ over $k_F$ and also of $L$ over $F$. We denote by $I$ the inertia group of $F$.

 We consider a smooth affine group scheme $\Gscr$ over $O_F$ with reductive generic fibre. Let $P = \Gscr(O_L)$ and let $G=\Gscr_F$.

 We denote by $\Flag_\Gscr$ the base change to $k$ of the affine flag variety (over $k_F$) associated with $\Gscr$ as in \cite[\S~1.c]{PappasRapoport:AffineFlag} and \cite[Def.~9.4]{BhattScholze:AffGr}. In particular, $\Flag_\Gscr$ is a sheaf on the fpqc-site of $k$-schemes ($\cha F = p$) resp.~of perfect $k$-schemes ($\cha F = 0$) with
 \[
  \Flag_\Gscr(k) = G(L)/P,
 \]
which is representable by an inductive limit of finite type schemes ($\cha F = p$) resp.~of perfectly of finite type schemes ($\cha F = 0$); see \cite[Thm.~1.4]{PappasRapoport:AffineFlag},\cite[Cor.~9.6]{BhattScholze:AffGr}. Hence we can define an underlying topological space of $\Flag_\Gscr$, which is Jacobson. This means that by mapping a subset of $\Flag_\Gscr$ to its intersection with the subset of closed points $\Flag_\Gscr(k) \subset \Flag_\Gscr$ we obtain a bijection between the open subsets of $\Flag_\Gscr$ and the open subsets of $\Flag_\Gscr(k)$ (same for closed and for locally closed subsets). Moreover, being a base change from $k_F$, we have an action of $\sigma$ on $\Flag_\Gscr$.

 To define affine Deligne-Lusztig varieties we fix an element $b\in G(L)$ and a locally closed subscheme $Z$ of the loop group $LG$ which is stable under $P$-$\sigma$-conjugation. Then we consider the functor on reduced $k$-schemes resp.~reduced perfect $k$-schemes with
$$X_Z(b)(S)=\{g\in \Flag_{\Gscr}(S)\mid g_x^{-1}b\sigma(g_x)\in Z(\kappa_x)\text{ for every geometric point } x\text{ of }S\}.$$ 

\begin{remark}
 The functor $X_Z(b)$ defines a locally closed reduced sub-indscheme of $\Flag_{\Gscr}$: Consider the functor $\tilde X_Z(b)$ on reduced $k$-schemes resp.\ perfect $k$-schemes with
$$\tilde X_Z(b)(S)=\{g\in LG(S)\mid g_x^{-1}b\sigma(g_x)\in Z(\kappa_x)\text{ for every geometric point } x\text{ of }S\}.$$ Then $\tilde X_Z(b)$ is the inverse image of $Z$ under the morphism $LG\rightarrow LG$ with $g\mapsto g^{-1}b\sigma(g)$. Since $Z$ is locally closed, also $\tilde X_Z(b)$ defines a locally closed reduced sub-ind-scheme of $LG$. Furthermore, $X_Z(b)$ is the image of $\tilde X_Z(b)$ under the quotient map $LG\rightarrow \Flag_{\Gscr}$, which is an $L^+\Gscr$-torsor. Hence it is again a locally closed sub-ind-scheme.
\end{remark}

 Let $J_b$ be the reductive group over $F$ whose $R$-valued points for any $F$-algebra $R$ are given by $$J_b(R)=\{g\in G(R\otimes_F L)\mid gb=b\sigma(g)\}.$$  Then for every $Z$ there is a natural action of $J_b(F)$ on $X_Z(b)$ given by left multiplication. Our main result is

 \begin{theorem}\label{thm finiteness}
  Assume in addition that $Z$ is bounded (see Section~\ref{sec2} for the definition of boundedness).
  \begin{subenv}
   \item $X_Z(b)$ is a scheme which is locally of finite type in the case that $\cha F = p$ and locally perfectly of finite type in the case $\cha F = 0$.
   \item The action of $J_b(F)$ on the set of irreducible components of $X_Z(b)$ has finitely many orbits.
  \end{subenv}
 \end{theorem}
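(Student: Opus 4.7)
The plan is to study the $\sigma$-conjugation map $\phi_b\colon LG \to LG$, $g\mapsto g^{-1}b\sigma(g)$, whose preimage of $Z$ is the sub-ind-scheme $\tilde X_Z(b)$ from the preceding remark. Its fibres coincide on $k$-points with left $J_b(F)$-cosets, since $g_1^{-1}b\sigma(g_1) = g_2^{-1}b\sigma(g_2)$ forces $g_1 g_2^{-1} \in J_b(F)$. First I would verify that $\phi_b$ is formally \'etale: at $g_0$ with $z_0 = \phi_b(g_0)$, identifying tangent spaces by right translation turns the differential into $v\mapsto \Ad(z_0)\sigma(v) - v$ on $\Lie(LG)=\mathfrak{g}(L)$, and a Lang-Steinberg argument applied to the $\sigma$-semilinear automorphism $\Ad(z_0)\sigma$ of the loop Lie algebra gives surjectivity of this operator together with the expected discrete (Lie-algebra-of-$J_b$) kernel. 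Scheme-theoretically this makes $\phi_b\colon LG \to [b]_\sigma$ an \'etale $J_b(F)$-torsor, where $[b]_\sigma$ denotes the $\sigma$-conjugacy class of $b$ in $LG$.

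Next I would combine this with the $P$-torsor $\pi\colon LG \to \Flag_\Gscr$. The $P$-$\sigma$-stability of $Z$ makes $\tilde X_Z(b)$ right-$P$-stable and $\phi_b$ equivariant for the twisted $P$-action $p\bullet z:=p^{-1}z\sigma(p)$ on the target, yielding a commutative square
\[
 \begin{array}{c@{\;}c@{\;}c}
  \tilde X_Z(b) & \lto & Z\cap [b]_\sigma \\
  \downarrow & & \downarrow \\
  X_Z(b) & \lto & [(Z\cap [b]_\sigma)/P\text{-}\sigma]
 \end{array}
\]
in which the vertical arrows are $P$-torsors and the horizontal arrows are \'etale $J_b(F)$-torsors. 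The hypothesis that $Z$ is bounded is exactly what one should expect to force the twisted quotient stack $[(Z\cap[b]_\sigma)/P\text{-}\sigma]$ to be of finite type (resp.\ perfectly of finite type); since the bottom horizontal arrow is an \'etale $J_b(F)$-torsor over this stack, $X_Z(b)$ would then be locally of finite type, establishing (1).

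For (2), the same square identifies the set of $J_b(F)$-orbits on irreducible components of $X_Z(b)$ with the set of irreducible components of $[(Z\cap [b]_\sigma)/P\text{-}\sigma]$; the latter is finite by the finite-type property of the quotient stack. The main technical hurdles I anticipate are (a) implementing the Lang-Steinberg step rigorously at the level of the ind-group $LG$ (and, in the mixed-characteristic case, its perfection), which requires working with the pro-algebraic structure given by the filtration of $P$ by congruence subgroups; and (b) verifying that boundedness of $Z$ translates into (perfect) finite-typeness of $[(Z\cap[b]_\sigma)/P\text{-}\sigma]$, which amounts to the statement that a bounded $P$-$\sigma$-stable subscheme of $LG$ meets only finitely many $P$-$\sigma$-orbits, each carrying a finite-dimensional transversal via the Cartan decomposition. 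Step (b) is where I expect the bulk of the actual work to lie.
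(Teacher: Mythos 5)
Your plan has an attractive high-level shape, but it contains a gap that is essentially the whole content of the theorem, and the paper's actual proof goes through an entirely different (building-theoretic) route.

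The genuine gap is your step (b). You reduce the theorem to the assertion that the twisted quotient $[(Z\cap[b]_\sigma)/P\text{-}\sigma]$ is of (perfectly) finite type, justify this by saying that a bounded $P$-$\sigma$-stable subscheme ``meets only finitely many $P$-$\sigma$-orbits, each carrying a finite-dimensional transversal,'' and concede that this is where the bulk of the work lies. But given your torsor diagram, the finite-typeness of that quotient stack is \emph{equivalent} to the two assertions of Theorem~\ref{thm finiteness} (local finite-typeness of $X_Z(b)$ plus finiteness of $J_b(F)$-orbits of irreducible components), so you have reformulated the theorem rather than reduced it. Moreover the heuristic offered in its support is not correct as stated: boundedness of $Z$ controls the number of $P$-\emph{double cosets} it meets (via the Iwasawa/Cartan decomposition~\eqref{eq-Iwasawa-decomposition}), not the number of $P$-$\sigma$-conjugation orbits, and there is no a priori reason for the latter to be finite or for each orbit to admit a finite-dimensional transversal; establishing such control is exactly what the theorem is about. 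The paper supplies the missing quantitative input from the geometry of the extended Bruhat-Tits building: the Cornut-Nicole estimate (Lemma~\ref{lemma translation}, based on the identification of $\Bcal^e(J_b,F)$ with the min-set of $b\sigma$) shows that any $g$ with $g^{-1}b\sigma(g)\in Z$ can, after left multiplication by some $j\in J_b(F)$, be moved into a fixed bounded subset $Z_{C_Z}$. This yields both the coarse ``fundamental domain'' statement (Proposition~\ref{prop Jb-orbits}) and, via the semi-metric $d$ and the $\sigma^s$-rational model, the existence of bounded open neighbourhoods (Proposition~\ref{prop bdd neighbourhood}). Nothing in your outline substitutes for this estimate.

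A secondary issue concerns the claimed formal \'etaleness of $\phi_b$. First, the tangent-map computation is off: $\sigma$ here is the relative $q$-power Frobenius, so $d\sigma = 0$ and the differential of $\phi_b$ at $g_0$ is $v\mapsto -v$, not $v\mapsto \Ad(z_0)\sigma(v)-v$; the Lang-Steinberg mechanism works precisely because $d\sigma$ vanishes, not because a $\sigma$-semilinear operator is surjective. Second, making $\phi_b\colon LG\to[b]_\sigma$ into an honest \'etale $J_b(F)$-torsor of ind-schemes would require verifying that $[b]_\sigma$ is a locally closed sub-ind-scheme and controlling the pro-algebraic filtration of $LG$; the group $J_b(F)$ is a noncompact locally profinite group acting freely but not cocompactly, and one has to be careful about the topology on the quotient. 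These points are probably surmountable, but they represent real work that the building-theoretic proof avoids entirely.
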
 
This theorem is related to the fact that they are the underlying reduced subscheme of moduli spaces of local $G$-shtukas and to the general expectation for the arithmetic case that (at least in the minuscule case) affine Deligne-Lusztig varieties are the reduction modulo $p$ of integral models of local Shimura varieties. Their cohomology is conjectured to decompose according to the local Langlands and Jacquet-Langlands correspondences. In order to be able to apply the usual methods, one needs the cohomology groups to be finitely generated $J_b(F)$-representations, and thus the ``infinite level'' cohomology groups to be admissible. This follows from the above theorem by a formal argument once the integral model is constructed (see for example \cite[Thm.~4.4]{Mieda:CompRZ}, \cite[Prop.~6.1]{RapoportViehmann:LocShVar}).

Many particular cases of the theorem have been considered before. For the particular case of affine Deligne-Lusztig varieties arising as the underlying reduced subscheme of a Rapoport-Zink moduli space of $p$-divisible groups with additional structure of PEL type, questions as in Theorem~\ref{thm finiteness} have been considered by several people. A recent general theorem along these lines is shown by Mieda \cite{Mieda:CompRZ}. Also, the (rare) cases where an affine Deligne-Lusztig variety is even of finite type have been classified, compare \cite[Prop.~4.13]{Goertz:Survey}.

In the case where $\Gscr$ is reductive over $O_F$ and $Z$ is a single $P$-double coset, a complete description of the set of $J_b(F)$-orbits of irreducible components of $X_{Z}(b)$ is known. The present work was motivated by our own results in this direction in \cite{HamacherViehmann:ADLVIrredComp}. Recently, complete descriptions were given by Zhou and Zhu \cite{ZhouZhu:IrredCompADLV} and by Nie \cite{Nie:ADLVIrredComp}.

The main tool to prove Theorem \ref{thm finiteness} is to relate the claimed finiteness statements to finiteness properties of certain subsets of the extended Bruhat-Tits building of $G$, using previous work of Cornut and Nicole \cite{CornutNicole:CrysBuildings}.\\

{\it Acknowledgement.} We are grateful to G. Prasad for pointing out some of his work on Bruhat-Tits theory to us. We thank the referee for his/her helpful comments.

\section{Reduction to the parahoric case}

 As a first step, we reduce to the case that $\Gscr$ is a parahoric group scheme. While most assertions in the following still hold true in the general setup, the assertion that $\Gscr$ is parahoric will simplify the proofs and the notation.
 
 By the fixed point theorem \cite[2.3.1]{Tits:padicRedGpCorvalis} the group $P \rtimes \langle \sigma \rangle$ has a fixed point $x$ in the extended Bruhat-Tits building of $G_L$. We refer to the subsequent section for the relation between the extended Bruhat-Tits building and the ``classical'' Bruhat-Tits building. 
  By definition the stabiliser $P_x$ of $x$ is $\sigma$-stable and contains $P$.  We denote by $\Gscr_{x}$ the corresponding group scheme over $O_F$ in the sense of \cite[1.9]{Prasad:UnramDescent} and \cite[2]{Prasad:TameDescent}.

 \begin{lemma}
  The fpqc quotient $L^+\Gscr_{x}/L^+\Gscr$ is representable by a finitely presented (resp.~perfectly finitely presented) scheme.
 \end{lemma}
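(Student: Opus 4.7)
The plan is to approximate $L^+\Gscr$ inside $L^+\Gscr_x$ by a finite-level jet truncation, thereby reducing the representability statement to a standard quotient of finite-type algebraic groups. By the universal property of the Bruhat-Tits group scheme attached to the point $x$, the inclusion $P = \Gscr(O_L) \subset P_x = \Gscr_x(O_L)$ together with the common generic fibre produces a unique $O_F$-morphism $\iota: \Gscr \to \Gscr_x$ extending the identity on $G$. Since both $\Gscr$ and $\Gscr_x$ are $O_F$-flat, the comorphism $\iota^*$ realises $\mathcal{O}(\Gscr_x)$ as an $O_F$-subalgebra of $\mathcal{O}(\Gscr)$ inside $\mathcal{O}(G)$. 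The induced map $L^+\iota: L^+\Gscr \to L^+\Gscr_x$ is then a monomorphism of fpqc sheaves, because for any $k$-algebra (resp.\ perfect $k$-algebra) $R$ both $\Gscr(R\pot{\unif})$ and $\Gscr_x(R\pot{\unif})$ embed into $G(R\rpot{\unif})$ (resp.\ into $G(W_{O_F}(R)[\unif^{-1}])$).

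The core step is to find $N \geq 0$ such that the kernel $K_N := \ker(L^+\Gscr_x \to L^N\Gscr_x)$ of the $N$-th jet truncation is contained in $L^+\Gscr$ as a subfunctor of $L^+\Gscr_x$. Fix a finite set of $O_F$-algebra generators $g_1, \dots, g_m$ of $\mathcal{O}(\Gscr)$. Since $\iota^*$ becomes an isomorphism after inverting $\unif$, one can choose $N$ so large that $h_j := \unif^N g_j \in \mathcal{O}(\Gscr_x)$ for each $j$. Given an $R$-point $\psi \in K_N(R)$, realised as a ring homomorphism $\psi: \mathcal{O}(\Gscr_x) \to R\pot{\unif}$ with $\psi \equiv \psi_0 \pmod{\unif^{N+1}}$ (where $\psi_0$ is evaluation at the identity section), one computes
\[
 \psi(h_j) \;\equiv\; \psi_0(h_j) \;=\; \unif^{N}\, g_j(e) \pmod{\unif^{N+1}},
\]
and $g_j(e) \in O_F$, so $\psi(h_j) \in \unif^{N}\, R\pot{\unif}$. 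Hence the unique extension $\tilde{\psi}: \mathcal{O}(\Gscr) \to R\rpot{\unif}$ sends each $g_j = \unif^{-N} h_j$ into $R\pot{\unif}$, yielding a lift of $\psi$ to an $R$-point of $L^+\Gscr$.

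Granted this inclusion, the fpqc quotient identifies as
\[
 L^+\Gscr_x / L^+\Gscr \;\cong\; L^N\Gscr_x / \bar H,
\]
where $\bar H \subset L^N\Gscr_x$ is the scheme-theoretic image of the homomorphism $L^N\Gscr \to L^N\Gscr_x$. The ambient $L^N\Gscr_x$ is a smooth affine group scheme of finite type (resp.\ perfectly of finite type) over $k$, and $\bar H$ is a closed subgroup scheme of it because homomorphisms of algebraic groups have closed image. The quotient of a finite-type (resp.\ perfectly of finite type) affine group scheme by a closed subgroup is then representable by a finitely presented (resp.\ perfectly finitely presented) scheme by standard algebraic group theory, yielding the lemma.

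The main obstacle is the middle paragraph: extracting the morphism $\iota$ from the Bruhat-Tits/Prasad formalism and then carrying out the denominator-tracking calculation on coordinate rings that controls how close to the identity one must be in $L^+\Gscr_x$ to automatically lie in $L^+\Gscr$. Once this jet-level bound is in place, the remaining passage to the quotient in $L^N\Gscr_x$ is routine.
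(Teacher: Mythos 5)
Your proof is correct and follows the same overall plan as the paper: the key step in both is to find a jet level $n$ (your $N$) at which the kernel $K_n=\ker(L^+\Gscr_x\to L^n_+\Gscr_x)$ is contained in $L^+\Gscr$, after which the quotient $L^+\Gscr_x/L^+\Gscr$ is identified with the (image-)quotient at finite level $n$, a standard quotient of finite-type algebraic groups. Where you differ is the argument for that inclusion. The paper first observes it on geometric points --- the congruence subgroups $P_{x,n}=\ker(\Gscr_x(O_L)\to\Gscr_x(O_L/\unif^n))$ form a neighbourhood basis of the identity in $G(L)$, so $P_{x,n}\subset P$ for $n\gg 0$ --- and then upgrades this to an inclusion of subfunctors of $L^+\Gscr_x$ by noting that the kernel of the truncation map is an infinite-dimensional affine space (hence reduced) by Greenberg's structure theorem, so that the pointwise check suffices. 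You instead invoke the Bruhat--Tits \'etoff\'e property to produce $\iota\colon\Gscr\to\Gscr_x$, express the inclusion $\mathcal{O}(\Gscr_x)\subset\mathcal{O}(\Gscr)$ inside $\mathcal{O}(G)$, and run a denominator-tracking computation on a finite generating set of $\mathcal{O}(\Gscr)$; this proves the containment directly on $R$-points for every test ring $R$, avoiding any appeal to reducedness or to the topology on $G(L)$, at the cost of needing the \'etoff\'e morphism and the coordinate-ring bookkeeping. Both routes are valid, and yours works uniformly in equal and mixed characteristic (in the latter, for perfect $R$ the ring $W_{O_F}(R)$ is $\unif$-torsion-free, so the division $\tilde\psi(g_j)=\unif^{-N}\psi(h_j)$ is unambiguous). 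The final reduction to a quotient $L^N\Gscr_x/\bar H$ with $\bar H$ the scheme-theoretic image of $L^N\Gscr$ agrees with the paper's $L^+_n\Gscr_x/L^+_n\Gscr$, where $L^+_n\Gscr$ is implicitly read as its image.
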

 \begin{proof}
  We denote $P_{{x},n} \coloneqq \ker(\Gscr_{x}(O_L) \to \Gscr_{x}(O_L/\unif^n))$. Since the $P_{{x},n}$ form a neighbourhood basis of the unit element in $G(L)$ we have $P_{{x},n} \subset P$ for some $n$. Thus the positive loop group $L^+P$ contains the kernel of the reduction map into the truncated positive loop group $L^+P_{x} \to L^+_{n} P_{x}$. Indeed, we have just shown that this is true on geometric points and the kernel is an infinite dimensional affine space by Greenberg's structure theorem \cite[p.~263]{Greenberg:SchLocRings2}, thus in particular reduced. Hence we get $L^+\Gscr_{x}/L^+\Gscr \cong L^+_n\Gscr_{x}/L^+_n\Gscr$. Since the latter is a quotient of linear algebraic groups over $k_F$, the claim follows.
 \end{proof}

Since $LG \to \Flag_{\Gscr}$ is an $L^+\Gscr$-torsor, we get that $\Flag_{\Gscr}$ is \'etale locally isomorphic to $\Flag_{\Gscr_{x}} \times L^+\Gscr_{x}/L^+\Gscr$. In particular, the canonical projection $\Flag_\Gscr \to  \Flag_{\Gscr_{x}}$ is relatively representable and of finite type. Thus Theorem~\ref{thm finiteness} holds true for $\Gscr$ if and only if it is true for $\Gscr_{x}$, as it is enough to prove the theorem after enlarging $Z$ so that becomes stable under $P_x$-$\sigma$-conjugation. Let $\Gscr_x^\circ \subset \Gscr_x$ be the parahoric group scheme associated to $x$. Repeating the argument above, we see that it suffices to prove Theorem~\ref{thm finiteness} for $\Gscr_x^\circ$ instead of $\Gscr$.
 
 Therefore we can (and will) assume from now on that $\Gscr$ is a parahoric group scheme.

\section{Some properties of Bruhat-Tits buildings}\label{sec2}

 We consider the following group theoretical setup. Let $S_0 \subset G$ be a maximal $L$-split torus defined over $F$, let $T_0$ be its centraliser and let $N_0$ be the normaliser of $T_0$ in $G$. Then $T_0$ is a torus because $G$ is quasi-split over $L$. Thus $W = N_0(L)/T_0(L)$ is the relative Weyl group of $G$ over $L$. We denote by $P_{T_0}$ the unique parahoric subgroup of $T_{0}$. The extended affine Weyl group is defined as
 \[
  \Wtilde \coloneqq N_0(L)/P_{T_0} \cong X_\ast(T_0)_{\Gal(\cl{L}/L)} \rtimes W,
 \]
where $X_\ast(T_0)_{\Gal(\cl{L}/L)}$ denotes the group of Galois convariants of $X_\ast(T_0)$ over $L$. We may choose $S_0$ such that $P$ stabilises a facet in the apartment of $S_0$ and denote $\Wtilde^P = (N_0(L) \cap P)/P_{T_0} \subset \Wtilde$. By \cite[Appendix, Prop.~9]{PappasRapoport:AffineFlag} the embedding $N_0 \mono G$ induces a bijection
 \begin{equation} \label{eq-Iwasawa-decomposition}
  \Wtilde^P \backslash \Wtilde / \Wtilde^P \bij P\backslash G(L) / P.
 \end{equation}
  We call a subset $\Xtilde \subset G(L)$ bounded if it is contained in a finite union of $P$-double cosets. The bounded subsets form a bornology on $G(L)$, which does not depend on the choice of $P$. 

 Let $\Bcal^e(G,L)$ be the extended Bruhat-Tits building of $G$ over $L$, that is
 \[
  \Bcal^e(G,L) = \Bcal(G,L) \times V_0(G,L)
 \]
 where $\Bcal(G,L)$ is the ``classical'' Bruhat-Tits building of $G$ and $V_0(G,L) \coloneqq X_\ast(G^{\rm ab})_\RR^{\Gal(\cl{L}/L)}  \cong  X_\ast(Z(G))_\RR^{\Gal(\cl{L}/L)} $ with $Z(G)$ denoting the center of $G$. The extended apartment $\Acal^e(S,G) \subset \Bcal^e(G,L)$ of a maximal $L$-split torus $S$ is defined as $\Acal(S,G;L) \times V_0(G,L)$ where $\Acal(S,G;L)$ denotes the apartment of $S$. We recall from Landvogt \cite[\S~1.3]{Landvogt:Functoriality} that $\Bcal^e(G,L)$ is a polysimplicial complex with a metric $d$ and a $G(L) \rtimes \langle \sigma \rangle$-action by isometries. Moreover, one can canonically identify $\Bcal^e(G,F)$ with the set of $\sigma$-invariants $\Bcal^e(G,L)^{\langle \sigma \rangle}$. 
 
 We consider the canonical map
 \[
  i\colon G(L) \to \Isom (\Bcal^e(G,L)),
 \]
 where $\Isom (\Bcal^e(G,L))$ denotes the space of self-isometries of $\Bcal^e(G,L)$. A set $M \subset \Isom (\Bcal^e(G,L))$ is called bounded if for some (or equivalently every) non-empty bounded set $A \subset \Bcal^e(G,L)$ the set $\{f(x) \mid f \in M, x \in A\} \subset \Bcal^e(G,L)$ is bounded. We have the following statement about the compatibility of bornological structure.

 \begin{proposition}[{\cite[Prop.~4.2.19]{BruhatTits:RedGp2}}] \label{prop-bornology-comparison}
  A subset  $\Xtilde \subset G(L)$ is bounded if and only if its image under $i$ is.
 \end{proposition}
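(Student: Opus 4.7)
The plan is to fix a base point $x_0$ lying in the facet $F_P$ of $\Bcal^e(G,L)$ whose parahoric stabiliser is $P$, and to compare distances in $\Bcal^e(G,L)$ with the combinatorics of $\Wtilde$-orbits in the standard apartment via the Iwasawa-type decomposition~\eqref{eq-Iwasawa-decomposition}. First, for the easy direction, if $\Xtilde\subset\bigcup_{k=1}^n Pg_kP$, then since $P$ acts on $\Bcal^e(G,L)$ by isometries fixing $x_0$, for any non-empty bounded $A\subset\Bcal^e(G,L)$ the set $i(\Xtilde)(A)\subset\bigcup_k Pg_kPA$ lies in the closed ball around $x_0$ of radius $\max_k d(x_0,g_k x_0)+\mathrm{diam}(A\cup\{x_0\})$ (using twice that every $p\in P$ preserves distances from $x_0$), and is therefore bounded.

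For the converse, I would suppose $i(\Xtilde)$ bounded, so $\Xtilde\cdot x_0\subset B(x_0,R)$ for some $R>0$, and deduce that $\Xtilde$ meets only finitely many $P$-double cosets. By~\eqref{eq-Iwasawa-decomposition} it suffices to show that only finitely many classes in $\Wtilde^P\backslash\Wtilde/\Wtilde^P$ admit a representative $\dot w\in N_0(L)$ with $d(x_0,\dot w\cdot x_0)\leq R$; the equality $d(x_0,g\cdot x_0)=d(x_0,\dot w\cdot x_0)$ for every $g\in P\dot wP$ again follows from $P$ fixing $x_0$ and acting isometrically. Under the $G(L)$-equivariant inclusion $\Acal^e(S_0,G)\hookrightarrow\Bcal^e(G,L)$, the group $N_0(L)$ acts on the extended apartment through $\Wtilde\cong X_\ast(T_0)_{\Gal(\cl L/L)}\rtimes W$, so $\Wtilde\cdot x_0$ is a finite union of $W$-translates of a discrete translation lattice and is, in particular, a discrete subset of the finite-dimensional Euclidean space $\Acal^e(S_0,G)$. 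Consequently $\Wtilde\cdot x_0\cap B(x_0,R)$ is finite, and since $\Wtilde^P$ is finite, so is the set of classes in $\Wtilde^P\backslash\Wtilde/\Wtilde^P$ hit by $\Xtilde$.

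The main obstacle will be the careful passage between the metric on $\Bcal^e(G,L)$ and the combinatorics of $\Wtilde$-orbits: the $L$-Bruhat--Tits building is not locally compact, since the residue field of $L$ is algebraically closed and every chamber has infinitely many neighbours, so compactness of closed balls is unavailable. I bypass this by working in the apartment, where $\Wtilde\cdot x_0$ is genuinely discrete. A secondary but essential technicality is choosing $x_0$ in the relative interior of $F_P$, so that $P$ actually fixes $x_0$ pointwise (parahorics fix their facets pointwise and act trivially on $V_0(G,L)$) and so that $\Wtilde^P$ coincides with the isotropy of $x_0$ in $\Wtilde$; this is what makes \eqref{eq-Iwasawa-decomposition} geometrically reflect counting points of $\Wtilde\cdot x_0$ in a ball.
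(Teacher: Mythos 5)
The paper does not prove this proposition; it is stated with a bracketed reference to \cite[Prop.~4.2.19]{BruhatTits:RedGp2} and treated as a black box. Your self-contained argument is therefore a genuine addition, and its overall structure is sound: the easy direction by the isometric action with $P$ fixing $x_0$, the hard direction by transporting the question to the apartment via \eqref{eq-Iwasawa-decomposition}, where the $\Wtilde$-orbit of $x_0$ is discrete. You correctly identify and side-step the main obstacle, namely that $\Bcal^e(G,L)$ is not locally compact over $L$.

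There is, however, one incorrect intermediate claim. You assert that after choosing $x_0$ in the relative interior of $F_P$, the group $\Wtilde^P=(N_0(L)\cap P)/P_{T_0}$ coincides with the isotropy of $x_0$ in $\Wtilde$. This is false in general: a parahoric is only the \emph{connected} stabiliser of its facet, so the full $\Wtilde$-isotropy of $x_0$ can be strictly larger. For $G=\PGL_2$ with $P$ an Iwahori, $\Wtilde^P$ is trivial, yet the class of $\bigl(\begin{smallmatrix}0&1\\\unif&0\end{smallmatrix}\bigr)$ in $\Wtilde$ fixes the barycentre of the base alcove, so the isotropy is $\ZZ/2\ZZ$. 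Your final step ``since $\Wtilde^P$ is finite, so is the set of classes hit'' then does not literally follow from what precedes it, because distinct double cosets could a priori give rise to the same $\Wtilde^P$-orbit in $\Wtilde\cdot x_0$.

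The gap is easily repaired and your approach survives: what you actually need is that the full isotropy $H$ of $x_0$ in $\Wtilde$ is finite. This holds because the kernel of the action $\Wtilde\to\mathrm{Aff}(\Acal^e(S_0,G))$ is the torsion subgroup of $X_\ast(T_0)_{\Gal(\cl L/L)}$, which is finite, and the image of $H$ in $W$ is finite since $W$ is. Combined with the finiteness of $\Wtilde\cdot x_0\cap B(x_0,R)$, this shows that $\{w\in\Wtilde:\ w\cdot x_0\in B(x_0,R)\}$ is itself a finite set (a finite union of cosets of the finite group $H$), and hence meets only finitely many double cosets $\Wtilde^P w\Wtilde^P$. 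With that substitution the proof is complete; the separate observation that $\Wtilde^P$ is finite, while true (it is contained in $H$), is not the fact doing the work.
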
  
 
 We consider the following maps between extended Bruhat-Tits buildings. Let $f\colon G \to G'$ be a morphism of reductive $F$-groups. A $G(L)$-equivariant map $g\colon\Bcal^e(G,L) \to \Bcal^e(G',L)$ is called \emph{toral} if for every maximal $L$-split torus $S \subset G_L$ there exists a maximal $L$-split torus $S' \subset G'_L$ such that $f(S) \subset S'$ and $g$ restricts to an $X_\ast(S)_\RR$-translation equivariant map between the apartments of $S$ and $S'$. In \cite{Landvogt:Functoriality}, Landvogt proves that there always exists a $G(L) \rtimes \langle \sigma \rangle$-invariant toral map, which becomes an isometry after normalising the metric on $\Bcal^{e}(G',L)$. However, this map depends on an auxiliary choice. We give a precise formulation of the result in the form and context that we need later on. For this consider the fixed element $b\in G(L)$ and denote by $\nu_b \in X_\ast(G)_\QQ$ the Newton point of $b$ (see \cite[\S~4]{Kottwitz:GIsoc1} for its precise definition). We fix an integer $s \gg 0$ such that $s \cdot \nu_b \in X_\ast(G)$.  
 Denote by $M_b \subset G$ the Levi subgroup centralising $\nu_b$ (and thus $s\cdot\nu_b$). Then $J_{b}$ is the inner form of $M_b$ obtained by twisting the action of the Frobenius by $b$. We can thus use the following result to relate the buildings of $G$ and $J_b$. A similar result is also shown in \cite{CornutNicole:CrysBuildings}.
 
 \begin{proposition}[{\cite[Prop.~2.1.5]{Landvogt:Functoriality},\cite[Lemme~5.3.2]{Rousseau:Thesis}}] \label{prop-buildings-levi}
  Let $f\colon M_{b} \mono G$. Then there exists a toral $M_b(L) \rtimes \langle \sigma \rangle$-equivariant injective map
  \[
   f_\ast\colon \Bcal^e(M_b,L) \to \Bcal^e(G,L).
  \]
  Moreover, $f_\ast$ is injective and unique up to translation by an element of $V_0(G,L)^{\langle \sigma \rangle}$. In particular, its image is the same for every choice of $f_{\ast}$ and equal to $\Bcal^e(G,L)^{ (s\cdot\nu_b)(O_L^\times) }$. After a suitable normalisation of the metric on $\Bcal^e(G,L)$, this map becomes an isometry.
 \end{proposition}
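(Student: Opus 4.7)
The plan is to invoke Landvogt's general functoriality theorem for toral maps of extended Bruhat-Tits buildings, applied to the inclusion $f\colon M_b \mono G$, and then separately identify the image with the $(s\cdot\nu_b)(O_L^\times)$-fixed locus. For the construction, I would first choose a maximal $L$-split torus $S \subset M_b$ defined over $F$ together with a maximal $L$-split torus $S' \subset G$ defined over $F$ containing $f(S)$. The embedding $X_\ast(S)_\RR \mono X_\ast(S')_\RR$ then determines an affine map between the apartments $\Acal^e(S,M_b) \to \Acal^e(S',G)$ up to translation, and Landvogt's theorem extends this $M_b(L)$-equivariantly to the whole building to give $f_\ast$. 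Compatibility of these choices with the $F$-structure yields $\sigma$-equivariance, and injectivity follows from injectivity of the linear part together with transitivity of $M_b(L)$ on its apartments.

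For the identification of the image, the key observation is that $M_b$ is by definition the centralizer of $\nu_b$, so the cocharacter $s\cdot\nu_b$ factors through the (connected) center of $M_b$. Hence $(s\cdot\nu_b)(O_L^\times)$ is a bounded central subgroup of $M_b(L)$, and central bounded subgroups act trivially on the extended building of their ambient group. This gives the inclusion $\image(f_\ast) \subset \Bcal^e(G,L)^{(s\cdot\nu_b)(O_L^\times)}$. The reverse inclusion is the nontrivial content of the proposition: any point $x$ fixed by $(s\cdot\nu_b)(O_L^\times)$ is stabilised by the centraliser of this subgroup in $G(L)$, which is precisely $M_b(L)$; combined with the fact that the $(s\cdot\nu_b)(O_L^\times)$-fixed apartments in $\Bcal^e(G,L)$ are exactly the images under $f_\ast$ of apartments of $M_b$ (those whose underlying split torus contains the central cocharacter), one concludes $x \in \image(f_\ast)$.

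For uniqueness up to translation by $V_0(G,L)^{\langle\sigma\rangle}$, note that two choices $f_\ast, f_\ast'$ of toral $M_b(L) \rtimes \langle\sigma\rangle$-equivariant map differ on a fixed apartment by an $M_b(L)$-equivariant translation of $\Bcal^e(G,L)$; commuting with the affine action of $M_b(L)$ on its own apartment forces the translation vector to lie in the central factor $V_0(G,L)$, and $\sigma$-equivariance then forces it into the $\sigma$-invariants. The isometry statement follows after rescaling the metric on each irreducible factor of $\Bcal^e(G,L)$ so that the linear inclusion $X_\ast(S)_\RR \mono X_\ast(S')_\RR$ becomes an isometric embedding of Euclidean spaces; equivariance propagates this from the apartment to the whole building.

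The main obstacle is the reverse inclusion for the image: one must match fixed points of the specific bounded central subgroup $(s\cdot\nu_b)(O_L^\times)$ with the sub-building attached to its centraliser $M_b$. This relies on the Bruhat-Tits description of point stabilisers together with the classification of apartments containing a given fixed set, as developed in the cited works of Landvogt and Rousseau (and reformulated in \cite{CornutNicole:CrysBuildings} in closely related generality). The other components of the statement — existence, $\sigma$-equivariance, uniqueness, and the isometry — then follow by direct invocation of the functoriality theorem.
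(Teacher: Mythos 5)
The paper itself offers no proof of this proposition: it is stated with citations to Landvogt \cite[Prop.~2.1.5]{Landvogt:Functoriality} and Rousseau \cite[Lemme~5.3.2]{Rousseau:Thesis}, so there is no in-paper argument to compare against. Your sketch fills in more detail than the paper does, and the overall shape is reasonable: build a toral, equivariant map apartment-by-apartment, note that $(s\cdot\nu_b)(O_L^\times)$ is bounded and central in $M_b(L)$ so it fixes $f_\ast(\Bcal^e(M_b,L))$ pointwise, and resolve uniqueness and the isometry question on a single apartment.

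There is, however, a genuine error in your treatment of the reverse inclusion. You write that ``any point $x$ fixed by $(s\cdot\nu_b)(O_L^\times)$ is stabilised by the centraliser of this subgroup in $G(L)$, which is precisely $M_b(L)$.'' This implication is false: a point fixed by a subgroup $H$ is in general not fixed by $\Cent_{G(L)}(H)$. (Take $H$ central in $G$; every point of the building is $H$-fixed but certainly not $G(L)$-fixed.) What is true, and what the argument needs, is that the fixed locus $\Bcal^e(G,L)^{(s\cdot\nu_b)(O_L^\times)}$ is an $M_b(L)$-\emph{stable} convex subcomplex, and the substantive claim is that this subcomplex, with the restricted $M_b(L)$-action, \emph{is} the extended building of $M_b$. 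Your sentence about the ``$(s\cdot\nu_b)(O_L^\times)$-fixed apartments being exactly the images of $M_b$-apartments'' is essentially a restatement of that claim, not a derivation of it; the actual content is the descent/fixed-point theorem of Landvogt and Rousseau (reformulated by Cornut--Nicole), which identifies the fixed building of a bounded torus with the building of its centraliser. As written, the proposal's reverse inclusion both asserts a false intermediate step and then defers the real work to the references, so the central part of the proposition remains unproved.
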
  
 
\begin{remark}\label{rem23b} Since $J_{b,L} \cong M_{b,L}$, we obtain an identification of $\Bcal^{e}(J_b,L)$ with $\Bcal^{e}(G,L)^{(s\cdot\nu_b)(O_L^\times)}$. However, since $J_b$ is an inner twist of a Levi subgroup of $G$, this identification will not respect the action of the Frobenius in general. In order to distinguish it from the action on $\Bcal^e(G,L)$, we denote the Frobenius action on $\Bcal^{e}(J_b,L)$ (and $J_b(L)$) by $\sigma_b$. More explicitely, we have $\sigma_b = \restr{b\sigma}{\Bcal(J_b,L)} \times \restr{\sigma}{V_0(J_b,L)}$. Indeed, by \cite[Lemma~3.3.1]{Landvogt:Thesis}, the Frobenius action on the ``classical'' Bruhat-Tits building $\Bcal(J_b,L)$ is uniquely determined by the equation $\sigma_b(j.x) = \sigma_b(j).\sigma_b(x) = (b \sigma(j) b^{-1}).\sigma_b(x)$ and thus has to be equal to $b\sigma$. It follows from the explicit description in \cite[(3.3.2)]{Landvogt:Thesis}, that the Frobenius action on $V_0(J_b,L)$ remains the same.
 \end{remark}

  Now assume that we have an embedding of reductive groups $f\colon G \mono G'$. The following statement is the main result of \cite{Landvogt:Functoriality}.
  
  \begin{proposition}[{\cite[Thm.~2.2.1]{Landvogt:Functoriality}}] \label{prop-buildings-functoriality}
   There exists a $G(L) \rtimes \langle \sigma \rangle$-invariant toral map $f_\ast\colon \Bcal^{e}(G,L) \to \Bcal^{e}(G',L)$. Furthermore the metric on $\Bcal^{e}(G,L)$ can be normalised in a way such that $f_\ast$ becomes isometrical.
  \end{proposition}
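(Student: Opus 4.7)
The strategy is the standard one for functoriality results on Bruhat--Tits buildings: define $f_\ast$ on a single apartment, extend by $G(L)$-equivariance, and verify the required equivariance and metric properties.

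I would start by choosing a maximal $L$-split torus $S \subset G$ defined over $F$ together with a maximal $L$-split torus $S'$ of $G'$ containing $f(S)$; this is possible by standard conjugacy results for maximal split tori in quasi-split groups. The embedding $f$ then induces linear maps $X_\ast(S)_\RR \to X_\ast(S')_\RR$ and $V_0(G,L) \to V_0(G',L)$, which together give the linear part of a prospective affine map $\Acal^{e}(S,G;L) \to \Acal^{e}(S',G';L)$. To pin down the translation, one must choose base points $x_0$, $x_0'$ so that the resulting affine map intertwines the actions of $\sigma$ and of $\Stab_{N_0(L)}(\Acal^{e}(S,G;L))$; existence of such a compatible pair follows from the Bruhat--Tits fixed point theorem applied to the bounded group of ``affine discrepancies'' acting on the CAT(0) apartment of $G'$.

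Having defined $f_\ast$ on $\Acal^{e}(S,G;L)$, I would extend to all of $\Bcal^{e}(G,L)$ via $f_\ast(g\cdot a) := f(g)\cdot f_\ast(a)$ for $g \in G(L)$, $a \in \Acal^{e}(S,G;L)$. Well-definedness is the main obstacle: if $g\cdot a = g'\cdot a'$ with $a, a' \in \Acal^{e}(S,G;L)$, one must show $f(g)\cdot f_\ast(a) = f(g')\cdot f_\ast(a')$, which reduces to the assertion that the pointwise stabiliser in $G(L)$ of any point of the apartment maps under $f$ into the pointwise stabiliser in $G'(L)$ of its image. This is exactly the technical heart of Landvogt's paper: one must compare the valued root data of $G$ and $G'$ and show that the inclusion of root groups respects the affine filtrations compatibly with the restriction of affine roots of $S'$ in $G'$ to those of $S$ in $G$ (each root of $S'$ restricts to a positive integer multiple of a root of $S$, and the valuations intertwine accordingly).

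For the isometry claim, I would pull back any $W'$-invariant inner product on $X_\ast(S')_\RR$ along the induced linear map to obtain a $W$-invariant inner product on $X_\ast(S)_\RR$ (rescaling on each simple factor as needed). Normalising the metric on $\Bcal^{e}(G,L)$ using this inner product ensures that $f_\ast$ is an isometry on $\Acal^{e}(S,G;L)$; the $G(L)$-equivariance of $f_\ast$ together with the fact that both $G(L)$ and $G'(L)$ act by isometries on the respective buildings then propagates the isometry property to all of $\Bcal^{e}(G,L)$. The $\sigma$-equivariance, finally, is built into the choice of base points in the first step.
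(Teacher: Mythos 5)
The paper does not prove this proposition; it is cited directly from Landvogt [Thm.~2.2.1], so there is no in-paper argument to compare against. Your sketch is a plausible recapitulation of Landvogt's proof at the level of overall structure: construct a toral affine map on a chosen pair of compatible apartments, extend by $G(L)$-equivariance, and reduce well-definedness to a comparison of point-stabilisers via the valued root data. However, the two most delicate technical claims are stated incorrectly.

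The assertion that ``each root of $S'$ restricts to a positive integer multiple of a root of $S$'' is false: a root of $S'$ can restrict to zero on $S$, to a proper \emph{fraction} of a root of $S$, or even to a character of $S$ that is not a multiple of any single root of $S$. For example, for $\SL_2 \hookrightarrow \SL_3$ in the upper-left $2\times 2$ block, the root $e_1 - e_3$ of $\SL_3$ restricts to half the root of $\SL_2$; and for $\SL_2 \times \SL_2 \hookrightarrow \SL_4$ block-diagonally, the root $e_1 - e_3$ of $\SL_4$ restricts to a nontrivial linear combination of the two $\SL_2$-roots. What Landvogt actually uses is that for each root $a$ of $S$ the image of the root group $U_a$ lies in the subgroup of $G'(L)$ generated by those $U_{a'}$ with $a'|_S$ a positive multiple of $a$, together with a precise accounting of how the induced filtrations compare; this is the technical content of Landvogt \S\S~1.2--2.1, and it does not reduce to the integrality statement you wrote. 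Secondly, your appeal to a Bruhat--Tits fixed-point argument on a group of ``affine discrepancies'' to pin down the translation is suggestive but is not how Landvogt proceeds and is too vague to count as an argument as written; he instead fixes reference points and verifies compatibility with the stabiliser of the apartment and with $\sigma$ directly. The strategy is sound, but the sketch is imprecise at exactly the two junctures where the proof is genuinely hard.
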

  
  To simplify the notation, we identify $G$ with its image in $G'$. Now $b$, considered as element of $G'$, induces a group $J'_b$ which is an inner form of the centraliser of $\nu_b$ in $G'$. Since $f_\ast$ preserves the fixed points of $\nu_b(O_L^\times)$, we obtain a commutative diagram by Proposition~\ref{prop-buildings-levi} and Remark \ref{rem23b},
 \begin{equation}\label{thediagram}
   \begin{tikzcd}
    \Bcal^e(J_b,L) \arrow[hook]{r}{f_\ast} \arrow[hook]{d} & \Bcal^e(J'_{b},L) \arrow[hook]{d} \\
    \Bcal^{e}(G,L) \arrow[hook]{r}{f_\ast} &\Bcal^{e}(G',L).
   \end{tikzcd}
 \end{equation}
  
 \begin{lemma} \label{lemma-buildings-Frobenius-equivariance}
  The restriction $\restr{f_\ast}{\Bcal^e(J_b,L)}$ is $\sigma_b$-equivariant.
 \end{lemma}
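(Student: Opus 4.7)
The plan is to reduce the $\sigma_b$-equivariance of $\restr{f_\ast}{\Bcal^e(J_b,L)}$ to the already-established $G(L)\rtimes\langle\sigma\rangle$-equivariance of the ambient map $f_\ast\colon \Bcal^e(G,L) \to \Bcal^e(G',L)$ from Proposition~\ref{prop-buildings-functoriality}, combined with the explicit description of $\sigma_b$ provided by Remark~\ref{rem23b}. Under the identifications
\[
 \Bcal^e(J_b,L) = \Bcal^e(G,L)^{(s\cdot\nu_b)(O_L^\times)} \qquad \text{and} \qquad \Bcal^e(J'_b,L) = \Bcal^e(G',L)^{(s\cdot\nu_b)(O_L^\times)}
\]
from Proposition~\ref{prop-buildings-levi} and diagram~\eqref{thediagram}, $f_\ast$ sends $\Bcal^e(J_b,L)$ into $\Bcal^e(J'_b,L)$ automatically. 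Moreover, since $f_\ast$ is toral it respects the product decomposition $\Bcal^e = \Bcal \times V_0$, so it suffices to check equivariance separately on the classical-building factor and on the $V_0$-factor.

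On the classical factor, Remark~\ref{rem23b} (applied both to $G$ and to $G'$) says that $\sigma_b$ is the restriction of $b\cdot\sigma$, where $b$ is viewed as an element of $G(L)$ on the source and as $f(b) \in G'(L)$ on the target. Identifying $G$ with its image in $G'$ makes these elements the same. For $x_1 \in \Bcal(J_b,L)$ we then compute
\[
 f_\ast(\sigma_b(x_1)) = f_\ast(b\cdot\sigma(x_1)) = b\cdot f_\ast(\sigma(x_1)) = b\cdot\sigma(f_\ast(x_1)) = \sigma_b(f_\ast(x_1)),
\]
using $G(L)$-equivariance for the second equality and $\sigma$-equivariance for the third. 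On the $V_0$-factor, Remark~\ref{rem23b} gives $\sigma_b = \sigma$ on both sides, so the desired equivariance is immediate from the $\sigma$-equivariance of the ambient $f_\ast$.

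The only nontrivial point is to justify that $f_\ast$ really splits as a product along $\Bcal^e = \Bcal \times V_0$, so that the above factor-by-factor computation is valid; this is exactly the content of the torality condition combined with the functoriality of the canonical product decomposition of extended buildings, and it is already built into the framework of Proposition~\ref{prop-buildings-levi} and Proposition~\ref{prop-buildings-functoriality}. Once this splitting is in place, the equivariance is essentially the chain of equalities displayed above, together with its trivial analogue on~$V_0$.
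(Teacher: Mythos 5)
Your reduction to the $b\sigma$-equivariance of the ambient $f_\ast$ and to the explicit formula for $\sigma_b$ from Remark~\ref{rem23b} is the right starting point, and it agrees with how the paper sets things up. However, the pivotal claim that $f_\ast$ ``respects the product decomposition $\Bcal^e = \Bcal \times V_0$'', meaning $f_\ast = \phi_1 \times \phi_2$ with $\phi_1$ depending only on the $\Bcal$-coordinate and $\phi_2$ only on the $V_0$-coordinate, is not a consequence of torality and does not hold in general. Torality makes $f_\ast$ affine (translation-equivariant) on each extended apartment, so its linear part is a map $df\colon X_\ast(S)_\RR \to X_\ast(S')_\RR$; but this linear map need not carry $X_\ast(Z(J_b))_\RR$ (the $V_0(J_b)$-direction) into $X_\ast(Z(J'_b))_\RR$, because a cocharacter central in $J_b$ has no reason to be central in the larger group $J'_b$. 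Concretely, if $J'_b$ is semisimple (so $V_0(J'_b) = 0$) while $J_b$ has nontrivial central torus, $f_\ast$ sends the $V_0(J_b)$-lines into the honest building $\Bcal(J'_b)$, and $\phi_1$ genuinely depends on the $V_0(J_b)$-coordinate. In that case your factor-by-factor computation on the $\Bcal$-factor does not make sense. What \emph{is} true, and what the paper establishes, is only the one-sided factoring: the composition $p' \circ f_\ast$ with the projection to $V_0(J'_b,L)$ factors through $p\colon \Bcal^e(J_b,L) \epi V_0(J_b,L)$, and the proof of this uses the inclusion of derived tori $S^{\mathrm{der}} \subset S'^{\mathrm{der}}$ (which has no analogue on the central side). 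The paper then finishes not by a factor-by-factor check but by reducing to the single identity $f_\ast \circ t_b = t'_b \circ f_\ast$ for the specific translation $t_b$ induced by $b$ on $V_0$, which is obtained from the half-factoring together with $b$-equivariance of all the maps in sight. So the gap in your argument is real: you need to replace the unjustified full product splitting by the one-sided factoring and work with the particular translation $t_b$ (whose defining cocharacter $\nu_b$ \emph{is} central in $J'_b$ by the very construction of $M'_b$, which is why the argument goes through for it but not for arbitrary translations).
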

 A related statement is \cite{CornutNicole:CrysBuildings}, 3.5. For the convenience of the reader, we provide the details of the proof here.
 \begin{proof}
  We denote by $\sigma'_b$ the canonical Frobenius action on $\Bcal^e(J'_b,L)$. Note that the action of $b\sigma$ and the actions of $\sigma_b, \sigma_b'$ differ by the translations $t_b,t_b'$ induced by the action of $b$ on $V_0(J_b,L)$ and  $V_0(J_b',L)$ respectively. Since $f_\ast$ is $b\sigma$-equivariant, it suffices to show that $f_\ast \circ t_b = t_b' \circ f_\ast$. 

  To prove this, consider the composition of $f_\ast$ with the canonical projection $\Bcal^e(J_b',L) \epi V_0(J_b',L)$. We claim that this map factors through $V_0(J_b,L)$. This can be checked on extended apartments. Let $S \subset J_b, S' \subset J_b'$ be maximal split tori over $L$ with $f(S) \subset S'$ and $f_\ast(\Acal^e(S,J_b;L)) \subset \Acal^e(S',J'_b;L)$. For the intersections with the derived groups of $G,G'$ we have $S^{\rm der} \subset S'^{\rm der}$. Hence the composition $\Acal^e(S,J_b;L) \mono \Acal^e(S',J'_b;L) \epi V_0(J_b';L)$ is $S^{\rm der}(L)$-invariant and thus factors through $\Acal^e(S,J_b;L)/\Acal^e(S^{\rm der},J_b^{\rm der};L) = V_0(J_b,L)$. 
 
 Thus we obtain a commutative diagram
 \begin{center}
  \begin{tikzcd}
   \Bcal^e(J_b,L) \arrow[hook]{r}{f_\ast} \arrow[two heads]{d}{p} & \Bcal^e(J_b',L) \arrow[two heads]{d}{p'} \\
   V_0(J_b,L) \arrow[hook]{r}{f_\ast^{\rm ab}} & V_0(J_b',L)
  \end{tikzcd}
 \end{center}
  Since $p,p'$ and $f_\ast$ commute with the action of $b$, so does $f_\ast^{\rm ab}$. Thus $f_\ast^{\rm ab} \circ t_b = t_{b}' \circ f_\ast^{\rm ab}$, proving $f_\ast \circ t_b = t_{b}' \circ f_\ast$.
 \end{proof}

\section{Boundedness properties on the affine flag variety}\label{sec3}

 We denote by $w_G\colon G(L) \to \pi_1(G)_I$ the Kottwitz homomorphism. For any subset $X \subset G(L)$ and $\omega \in \pi_1(G)_I$, we define $X^\omega \coloneqq X \cap w_G^{-1}(\{\omega\})$. We remark that by \cite[Thm.~5.1]{PappasRapoport:AffineFlag} and \cite[Prop.~1.21]{Zhu:Satake} the connected components of $\Flag_\Gscr$ are precisely the subsets of the form $\Flag_\Gscr^\omega$.

 For further considerations, it will be useful to fix a presentation of $\Flag_\Gscr^{\rm red}$ as a limit of schemes. For any $w \in \Wtilde^P \backslash \Wtilde / \Wtilde^P$ we denote by
 \begin{align*}
  S_w^\circ &\coloneqq PwP/P \\
  S_w &\coloneqq \bigcup_{w' \leq w} S_{w'}
 \end{align*}
the Schubert cell and the Schubert variety associated with $w$, respectively. Here, $\leq$ denotes the Bruhat order on $\widetilde W$ induced by any fixed choice of an Iwahori subgroup of $P$. By \cite[\S~8]{PappasRapoport:AffineFlag} and \cite[Thm.~9.3]{BhattScholze:AffGr} each Schubert variety (resp.~cell) is a closed (resp.~locally-closed) quasi-compact subscheme of $\Flag_\Gscr$, which is of finite type in the case $\cha F = p$ and perfectly of finite type in the case $\cha F = 0$. Note that by (\ref{eq-Iwasawa-decomposition}), we have that $\Flag_\Gscr = \bigcup S_w^\circ$ is a decomposition into locally closed subsets, hence we can write $\Flag_\Gscr^{\rm red} = \varinjlim S_w$.

 We equip $\Flag_\Gscr(k)$ with the bornology induced by the canonical projection $G(L) \epi G(L)/P = \Flag_\Gscr(k)$, that is a subset $X \subset \Flag_\Gscr(k)$ is bounded, if it is contained in a finite union of Schubert varieties. We obtain the following geometric characterisation of bounded subsets.

\begin{lemma} \label{lemma-bornology-on-Fl}
 A subset $X \subset \Flag_\Gscr(k)$ is bounded if and only if it is relatively quasi-compact (i.e.\ contained in a quasi-compact subset). In this case $X$ is even quasi-compact itself.
\end{lemma}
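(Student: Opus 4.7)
The plan is to split the equivalence into the two non-trivial implications, exploiting the ind-presentation $\Flag_\Gscr^{\rm red}=\varinjlim_w S_w$ by closed immersions.

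For the forward direction (bounded $\Rightarrow$ quasi-compact), I would observe that a bounded $X$ lies in a finite union of Schubert varieties, which by directedness of the Bruhat order is contained in a single Schubert variety $S_w$. Since $S_w$ is (perfectly) of finite type over $k$, its underlying topological space is Noetherian, and every subspace of a Noetherian space is quasi-compact. So $X$ is itself quasi-compact, which of course also gives that it is relatively quasi-compact.

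For the backward direction (relatively quasi-compact $\Rightarrow$ bounded), suppose $X\subset Y$ with $Y$ quasi-compact and, for contradiction, that $Y\not\subset S_w$ for any $w$. Using that $\Wtilde$ (hence $\Wtilde^P\backslash\Wtilde/\Wtilde^P$) is countable, I fix a cofinal sequence $w_1\le w_2\le\cdots$ in the Bruhat order and choose $y_i\in Y\setminus S_{w_i}$. A short cofinality argument shows the set of values $\{y_i\}$ must be infinite: otherwise all $y_i$ would lie in a common $S_{w_N}$, contradicting $y_i\notin S_{w_i}$ for $i>N$. After passing to a subsequence of pairwise distinct $y_i$'s, the plan is to show that $A\coloneqq\{y_i\}$ is an infinite closed discrete subset of $Y$, contradicting quasi-compactness of $Y$ (since closed subspaces of quasi-compact spaces are quasi-compact, but infinite discrete spaces are not).

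Closedness and discreteness of $A$ should both follow from the colimit topology on the ind-scheme: for any fixed $w$, cofinality provides $N$ with $w\le w_N$, and then $y_i\notin S_{w_i}\supset S_w$ for $i\ge N$, so $A\cap S_w\subset\{y_1,\ldots,y_{N-1}\}$ is finite and hence closed in $|S_w|$. Thus $A$ is closed in $|\Flag_\Gscr|$, and the identical argument applied to $A\setminus\{y_i\}$ makes each singleton open in $A$. The main obstacle is precisely this last step, namely manufacturing an infinite closed discrete subspace out of the failure of containment; everything else is either formal or follows from the Noetherian property of Schubert varieties.
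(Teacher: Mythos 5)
Your proof is correct and follows essentially the same strategy as the paper's: boundedness implies quasi-compactness via the Noetherianity of Schubert varieties, and unboundedness precludes relative quasi-compactness by exhibiting an infinite closed discrete subspace inside the alleged quasi-compact superset. The only mechanical difference is in manufacturing that discrete set --- the paper picks one point in each Schubert cell $S_w^\circ$ that the set meets, whereas you pick points escaping a cofinal chain $S_{w_1}\subset S_{w_2}\subset\cdots$; both exploit that the intersection with every $S_w$ is finite, hence closed, in the colimit topology.
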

\begin{proof}
 Since the $S_w$ are quasi-compact, any bounded subset of $\Flag_\Gscr$ is relatively quasi-compact. The $S_w$ are Noetherian, thus their subsets are quasi-compact themselves. 
 
  On the other hand, assume that $X$ is not bounded. We prove that $X$ is not quasi-compact by constructing an infinite discrete closed subset $Y \subset X$.
 By definition, the set $ T \coloneqq \{w \in \widetilde{W} \mid X \cap S_w^\circ \not= \emptyset \}$ is infinite. For each $w \in T$, choose an element $x_w \in  X \cap S_w^\circ$. Then $Y \coloneqq \{x_w \mid w\in T\}$ is infinite and discrete. Its intersection with every $S_w$ for $w\in\widetilde W$ is closed, hence $Y$ is closed.
 \end{proof}

\begin{lemma} \label{lemma-representability-in-Fl}
 Let $X \subset \Flag_\Gscr$ be a locally closed reduced sub-ind-scheme.
 Then $X$ is a scheme if and only if every point of $X(k)$ has an open neighbourhood which is bounded as subset of $\Flag_\Gscr(k)$. In this case $X$ is locally of finite type if $\cha F =p$, respectively locally of perfectly finite type if $\cha F  = 0$.  
\end{lemma}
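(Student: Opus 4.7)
The plan is to prove the two implications in turn, using the presentation $\Flag_\Gscr^{\mathrm{red}} = \varinjlim_w S_w$ of the affine flag variety as a filtered colimit of quasi-compact Schubert varieties along closed immersions.

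For the forward direction, assume $X$ is a scheme and fix $x \in X(k)$. Choose any quasi-compact open neighbourhood $U \subset X$ of $x$, for instance an affine open. The composition $U \hookrightarrow X \hookrightarrow \Flag_\Gscr$ is a morphism from a quasi-compact scheme into $\Flag_\Gscr$, and by the standard factorisation property for strict ind-schemes of this form it must factor through a single Schubert variety $S_w$. Hence $U \subset S_w$, so $U$ is bounded by definition.

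For the converse, suppose each $x \in X(k)$ admits an open neighbourhood $U_x \subset X$ that is bounded. By definition $U_x \subset T_x \coloneqq S_{w_1} \cup \cdots \cup S_{w_m}$ for finitely many Schubert varieties, and $T_x$ is a closed quasi-compact subscheme of $\Flag_\Gscr$, of finite type when $\cha F = p$ and perfectly of finite type when $\cha F = 0$. Since $U_x$ is locally closed in $\Flag_\Gscr$ and contained in the closed subscheme $T_x$, it is locally closed in $T_x$, and its reduced induced scheme structure inside $T_x$ coincides with the sub-ind-scheme structure inherited from $X$, both being the reduced induced structure on the same subset of $\Flag_\Gscr$. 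In particular, each $U_x$ is a scheme of the appropriate finite-type flavour, and the collection $\{U_x\}$ forms a Zariski open cover of $X$ by schemes; gluing then yields the claimed scheme structure on $X$ together with the asserted local finiteness properties.

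The only nontrivial ingredient is the factorisation statement invoked in the forward direction, namely that a morphism from a quasi-compact scheme to $\Flag_\Gscr$ factors through some $S_w$. This is standard for strict ind-schemes of ind-(perfectly) finite type presented as filtered colimits along closed immersions, and in fact can also be recovered from Lemma~\ref{lemma-bornology-on-Fl} applied to the quasi-compact image of $U$ in $\Flag_\Gscr$, using the Jacobson property recalled in the introduction. Beyond this the argument is routine gluing, together with the observation that a reduced locally closed subscheme of $\Flag_\Gscr$ is determined by its underlying subset, so no serious obstacle arises.
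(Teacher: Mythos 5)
Your proof is correct and follows essentially the same route as the paper: the ``only if'' direction amounts to the observation that a point of a scheme has a quasi-compact open neighbourhood, which is bounded by Lemma~\ref{lemma-bornology-on-Fl} (your detour through the factorisation property for strict ind-schemes is logically fine but not needed, as you yourself note); and the ``if'' direction reduces by Zariski-locality to a bounded $X$, which is then a locally closed reduced subscheme of a finite union of Schubert varieties and hence (perfectly) of finite type.
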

\begin{proof}
 The ``only if'' direction follows from the previous lemma because every point of a scheme has a quasi-compact open neighbourhood.

 To prove the ``if'' direction, we may assume that $X$ is bounded, since its representability is a Zariski-local property. Then the embedding $X(k) \mono \Flag_\Gscr$ factors through some finite union of Schubert varieties by the previous lemma, in particular $X(k)$ is a locally closed subvariety of this union. Since the Schubert varieties are  (perfectly) of finite type, so is $X$.
\end{proof}

\begin{remark}
 The analogous assertions of Lemmas~\ref{lemma-bornology-on-Fl} and \ref{lemma-representability-in-Fl} in $LG(k)$, the loop group of $G$, also hold true (with the exception of the last statement of Lemma~\ref{lemma-representability-in-Fl}). Indeed, since a set $X \subset G(L)$ is bounded if and only if $X \cdot P$ is bounded, it suffices to prove the assertion in the case that $X$ is right $P$-invariant. Then the claim follows from the above lemmas since $LG \to \Flag_\Gscr$ is an $L^+ \Gscr$-torsor and thus relatively representable and quasi-compact.
\end{remark}

\section{Affine Deligne Lusztig varieties}

We now prove that the first part of Theorem~\ref{thm finiteness} implies the second. By Lemma~\ref{lemma-representability-in-Fl} together with the first part of the theorem, its second assertion is equivalent to the following proposition, which we prove below.
  
 \begin{proposition}\label{prop Jb-orbits}
Let  $Z$ a bounded subset of $G(L)$ and denote
 \[
  \Xtilde_Z(b) \coloneqq \{g \in G(L) \mid g^{-1}b \sigma(g) \in Z \}.
 \]
 Then there exists a bounded subset $\Xtilde_0 \subset \Xtilde_Z(b)$ such that $\Xtilde_Z(b) =  J_b(F) \cdot \Xtilde_0$.
\end{proposition}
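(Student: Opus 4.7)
\emph{Plan.} The strategy is to translate the $\sigma$-conjugacy condition $g^{-1} b \sigma(g) \in Z$ into a uniformly bounded displacement condition on $\Bcal^e(G,L)$, and then to exploit the cocompact action of $J_b(F)$ on its own extended Bruhat-Tits building.

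\medskip

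Since $\Gscr$ is parahoric, fix a point $y_0 \in \Bcal^e(G,L)$ in the facet stabilised by $P$ which moreover lies in $\Bcal^e(G,F) = \Bcal^e(G,L)^{\langle \sigma \rangle}$; such a point exists by applying the Bruhat-Tits fixed point theorem to $P \rtimes \langle \sigma \rangle$. For every $g \in \Xtilde_Z(b)$, writing $z \coloneqq g^{-1} b \sigma(g) \in Z$ and using $\sigma(y_0) = y_0$, we have $b\sigma(g y_0) = b\sigma(g) y_0$, hence
\[
 d\bigl( g y_0, \, b\sigma(g y_0) \bigr) \;=\; d(y_0, z y_0) \;\leq\; R \coloneqq \sup_{z \in Z} d(y_0, z y_0),
\]
which is finite by Proposition \ref{prop-bornology-comparison} and the boundedness of $Z$. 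In other words, $g y_0$ lies in $M_R \coloneqq \{y \in \Bcal^e(G,L) \mid d(y, b\sigma(y)) \leq R\}$ for every $g \in \Xtilde_Z(b)$.

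\medskip

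The core of the proof is then the following building-theoretic statement: there exists a bounded subset $B \subset \Bcal^e(G,L)$ with $M_R \subset J_b(F) \cdot B$. The geometric content is that, after a suitable normalisation of the metric, the image of $\Bcal^e(J_b,L) \hookrightarrow \Bcal^e(G,L)$ furnished by Proposition \ref{prop-buildings-levi} is the minimal-displacement locus of the isometry $b\sigma$, with minimum value determined by the image of $\nu_b$ in $V_0(G,L)$. A point $y \in M_R$ therefore lies at bounded distance from $\Bcal^e(J_b,L)$, and within $\Bcal^e(J_b,L)$ the condition $d(y, b\sigma(y)) \leq R$ cuts out a bounded neighbourhood of the $\sigma_b$-fixed subbuilding $\Bcal^e(J_b,F)$, on which $J_b(F)$ acts cocompactly by the standard theory. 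The main technical subtlety is the discrepancy between $b\sigma$ and $\sigma_b$ on the $V_0$-component recorded in Remark \ref{rem23b}; this is precisely what is handled by the functoriality diagram (\ref{thediagram}) together with Lemma \ref{lemma-buildings-Frobenius-equivariance}, which allow one to reduce via a faithful embedding $G \hookrightarrow \GL_N$ to the corresponding assertion for $\GL_N$, where it follows from the lattice-theoretic framework of \cite{CornutNicole:CrysBuildings}.

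\medskip

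Granting this Key Lemma, define $\Xtilde_0 \coloneqq \{g \in \Xtilde_Z(b) \mid g y_0 \in B\}$. For each $g \in \Xtilde_Z(b)$ the Key Lemma supplies $j \in J_b(F)$ with $(jg) y_0 = j \cdot (g y_0) \in B$, i.e.\ $jg \in \Xtilde_0$, proving $\Xtilde_Z(b) = J_b(F) \cdot \Xtilde_0$. Finally, $\Xtilde_0$ is bounded in $G(L)$: the condition $g y_0 \in B$ (with $B$ fixed and bounded) forces $\{i(g) \mid g \in \Xtilde_0\}$ to send any fixed bounded subset of $\Bcal^e(G,L)$ into a set of uniformly bounded diameter around $B$, so this family of isometries is bounded, and then $\Xtilde_0$ is bounded by Proposition \ref{prop-bornology-comparison}. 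The essential difficulty is thus the Key Lemma of the third paragraph, where the machinery of Section \ref{sec2} and the results of Cornut-Nicole enter decisively.
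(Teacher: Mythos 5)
Your proposal is essentially the paper's approach: translate the $\sigma$-conjugacy condition into a uniformly bounded $b\sigma$-displacement condition on $\Bcal^e(G,L)$, invoke Cornut--Nicole to get bounded distance from $\Bcal^e(J_b,F)$, and conclude with cocompactness of the $J_b(F)$-action. Your final step (verifying boundedness of $\Xtilde_0$ from $\Xtilde_0 \cdot y_0 \subset B$ via Proposition~\ref{prop-bornology-comparison}) is correct.

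Two remarks on the differences. First, the paper insists on a basepoint $p_0 \in \Bcal^e(J_b,F) \cap \Bcal^e(G,F)$, arranged by replacing $b$ with a decent representative (Lemma~\ref{rem42}); this is because the paper packages the core estimate through the map $\iota$ and its $\sigma_b$-equivariance on $J_b(L)$, i.e.\ as Lemma~\ref{lemma translation}(b). You instead take any $\sigma$-fixed point $y_0$ in the facet of $P$ and state the cocompactness directly on the displacement sub-level set $M_R$, which avoids the decency reduction for this proposition and is slightly leaner. Second, your sketch of the Key Lemma is a bit off in its mechanics: Cornut--Nicole's Theorem~3.3 and Proposition~8 identify $\mathrm{Min}(b\sigma)$ directly with the image of $\Bcal^e(J_b,F)$ (not $\Bcal^e(J_b,L)$) and give the distance estimate in one stroke for the general reductive group; there is no two-stage reduction first to $\Bcal^e(J_b,L)$ and no need to embed into $\GL_N$. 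Since $b\sigma$ and $\sigma_b$ differ only by a translation on the $V_0$-factor (Remark~\ref{rem23b}), a point of $\Bcal^e(J_b,L)$ with small $b\sigma$-displacement need not be close to the $\sigma_b$-fixed locus unless one already controls this translation, which is exactly what Cornut--Nicole handle; your appeal to diagram~(\ref{thediagram}) and Lemma~\ref{lemma-buildings-Frobenius-equivariance} to ``reduce to $\GL_N$'' is not how either the paper or loc.~cit.\ proceeds and is not needed.
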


For the proof of the proposition we need some preparation.

\begin{lemma}\label{rem42}
The $\sigma$-conjugacy class of $b\in G(L)$ has a decent representative for which $\Bcal^e(J_b,F)\cap \Bcal^e(G,F)\neq \emptyset$ (viewed as subspaces of $\Bcal^e(G,L)$).
\end{lemma}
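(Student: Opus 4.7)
\emph{Proof plan.} The strategy is to combine a Bruhat--Tits fixed point argument for $\sigma_b$ with a subsequent $\sigma$-conjugation that aligns the resulting fixed point with $\Bcal^e(G, F)$, all while preserving decency.

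First, by Kottwitz's theorem on $\sigma$-conjugacy classes (existence of decent representatives together with the reduction to the basic case in $M_b$), we may pick a decent representative $b_0 \in M_b(F_s)$ of $[b]$, basic in $M_b$, with decency integer $s$. Under the identification $\Bcal^e(J_{b_0}, L) = \Bcal^e(G,L)^{(s\nu_b)(O_L^\times)}$ from Proposition~\ref{prop-buildings-levi}, and with the description of $\sigma_{b_0}$ given in Remark~\ref{rem23b}, the decency identity $(b_0\sigma)^s = (s\nu_b)(\unif)\sigma^s$ together with the triviality of the $Z(M_b)$-action on the classical building $\Bcal(M_b, L)$ yields $\sigma_{b_0}^s = \id$ on the CAT(0) subspace $\Bcal^e(J_{b_0}, F_s) := \Bcal^e(J_{b_0}, L)^{\sigma^s}$. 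The Bruhat--Tits fixed point theorem, applied to the finite cyclic group $\langle\sigma_{b_0}\rangle$, then produces a point $x_0 \in \Bcal^e(J_{b_0}, F)$.

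If $x_0$ is not already $\sigma$-fixed we modify $b_0$ by $\sigma$-conjugation. The points $x_0$ and $\sigma(x_0)$ share the same type in $\Bcal^e(G, L)$ and so lie in a single $G(L)$-orbit; since $G(F^{unr}) = \bigcup_{s'} G(F_{s'})$ is dense in $G(L)$ and $\Stab(x_0)$ is open, we can find $h \in G(F_{s'})$ with $h \cdot x_0 = \sigma(x_0)$ for some $s'$ divisible by $s$. By Steinberg's theorem applied to $G$ over $F^{unr}$ there is $g \in G(F^{unr})$ with $\sigma(g)g^{-1} = h$, and after possibly enlarging $s'$ we have $g \in G(F_{s'})$. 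Set $b := g^{-1}b_0\sigma(g)$. The telescoping computation
\[
 \prod_{i=0}^{s'-1}\sigma^i(b) = g^{-1}\left(\prod_{i=0}^{s'-1}\sigma^i(b_0)\right)\sigma^{s'}(g) = g^{-1}\,(s'\nu_b)(\unif)\,g = (s'\nu_b)(\unif),
\]
using $g \in G(F_{s'})$ and the centrality of $(s'\nu_b)(\unif)$ in $M_b$, shows that $b$ is decent with integer $s'$. A direct check then shows that $x := g^{-1}\cdot x_0$ lies in $\Bcal^e(J_b, F) \cap \Bcal^e(G, F)$: it is $\sigma$-fixed by substituting $h = \sigma(g)g^{-1}$ into $h \cdot x_0 = \sigma(x_0)$, and $\sigma_b$-fixed because $\sigma_b = g^{-1}\sigma_{b_0}g$ as operators on $\Bcal^e(G, L)$ while $\sigma_{b_0}(x_0) = x_0$.

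The main technical point is to ensure that the Lang cocycle solution $g$ can be chosen in a finite unramified extension $F_{s'}$, so that the $\sigma$-conjugated element $b$ remains decent. This is arranged by combining Steinberg's theorem for the connected reductive group $G$ over $F^{unr}$ with the density of $G(F^{unr})$ in $G(L)$ and the fact that any element of $G(F^{unr})$ is defined over some $F_{s'}$.
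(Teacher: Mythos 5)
The first half of your argument (finding a $\sigma_{b_0}$-fixed point $x_0$ via the Bruhat--Tits fixed point theorem on the convex, complete subspace $\Bcal^e(J_{b_0},L)^{\sigma^s}$) is sound, and is in the spirit of the fixed-point arguments that underlie both this lemma and the reduction in Section 2. The problem is the ``Steinberg/Lang'' step that is supposed to realign $x_0$ with $\Bcal^e(G,F)$.

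You ask for $g\in G(F^{\mathrm{unr}})$ with $\sigma(g)g^{-1}=h$, where $h\cdot x_0=\sigma(x_0)$. But there is no Lang--Steinberg theorem for $G(L)$ or $G(F^{\mathrm{unr}})$ with the automorphism $\sigma$: the Lang map $g\mapsto\sigma(g)g^{-1}$ on $G(L)$ is very far from surjective, and the failure of surjectivity is precisely what makes the set $B(G)$ of $\sigma$-conjugacy classes nontrivial. Concretely, $\sigma(g)g^{-1}=h$ is equivalent to $g^{-1}h^{-1}\sigma(g)=1$, i.e.\ to $h^{-1}$ lying in the \emph{trivial} $\sigma$-conjugacy class. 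In your situation the relation $\sigma_{b_0}(x_0)=x_0$ forces $\sigma(x_0)=b_0^{-1}\cdot x_0$ on the classical factor, so the natural choice of $h$ is essentially $b_0^{-1}$; solving your Lang equation would then mean $b_0\sim_\sigma 1$, which is false unless $[b]$ is the trivial class. Steinberg's theorem (or Lang's) applies to connected groups over finite (or algebraically closed) fields and does not transport to this setting. So as written the second step collapses, and the ``same type implies same $G(L)$-orbit'' claim used to produce $h$ is in any case redundant here and unargued.

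The paper avoids this entirely by never leaving $M_b$: using that reductive groups over $F$ are residually quasi-split (\cite[Thm.~4.1]{BruhatTits:RedGp3}), one finds a $\sigma$-stable alcove $\afr$ in an apartment of $\Bcal(M_b,L)$, notes that the Kottwitz homomorphism identifies the stabiliser $\Omega\subset\Wtilde_{M_b}$ of $\afr$ with the relevant fundamental group, and then chooses $b$ (inside its $\sigma$-conjugacy class, basic in $M_b$) to be a decent lift of an element of $\Omega$ using \cite[Lemma~2.2.10]{Kim:CenLeaf}. Then the barycenter of $\afr$ (paired with any $\sigma$-fixed point of $V_0$) is simultaneously fixed by $\sigma$ (because $\afr$ is $\sigma$-stable) and by $b\sigma$ (because $b\in\Omega$ stabilises $\afr$), which is exactly the condition for lying in $\Bcal^e(J_b,F)\cap\Bcal^e(G,F)$. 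The moral is that one should \emph{build in} the $\sigma$-stability at the level of the choice of representative, rather than try to impose it afterwards by $\sigma$-conjugation, which runs into the nontriviality of $B(G)$.
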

Here, an element $b\in G(L)$ is called decent if there is a natural number $s$ with $(b\sigma)^s=s\nu(\epsilon)\sigma^s$.
\begin{proof}
In Remark~\ref{rem23b} we identified the extended Bruhat-Tits building $\Bcal^e(J_b,L)$ with $\Bcal^e(M_b,L)$. We fix a maximal $L$-split torus $S \subset M_b$, denote by $T$ its centraliser and by $\Wtilde_{M_b}$ the associated extended affine Weyl group of $M_b$. Since any reductive group over $F$ is residually quasi-split by \cite[Thm.~4.1]{BruhatTits:RedGp3}, there exists a $\sigma$-stable alcove $\afr$ in $\Acal(S,M_b,L)$. 
 The Kottwitz homomorphism maps the stabiliser $\Omega \subset \Wtilde_{M_b}$ of $\afr$ isomorphically onto $\pi_1(G)_I$. Since any basic $\sigma$-conjugacy class is uniquely determined by its Kottwitz point, we may assume that $b$ (after replacing it by a $M_b(L)$-$\sigma$-conjugate if necessary) is a representative in $M_b(L)$ of an element of $\Omega$. By \cite[Lemma~2.2.10]{Kim:CenLeaf} we may assume this representative to be decent. It now follows from the explicit description of $\sigma_b$ in Remark~\ref{rem23b} that we may take $p_0 \coloneqq (p_b,p_v)$ where $p_b \in \Bcal(M_b,L)$ is the barycenter of $\afr$ and $p_v \in V_0(M_b,L)$ is any point fixed by $\sigma$. Then $p_0\in \Bcal^e(J_b,F)\cap \Bcal^e(G,F).$
\end{proof}

Thus after replacing $b$ by a $\sigma$-conjugate if necessary, we fix $p_0\in \Bcal^e(J_b,F) \cap \Bcal^e(G,F)$. In order to relate the bornologies on $G(L)$ and on $\Bcal^e(G,L)$ directly, we consider the map
\[
 \iota\colon G(L) \to \Bcal^e(G,L), g \mapsto g.p_0.
\]
By the choice of $p_0$, the map $\iota$ is $G(L) \rtimes \langle \sigma \rangle$-equivariant and the restriction to $J_b(L)$ is moreover $\sigma_b$-equivariant, cf. Remark \ref{rem23b}. By Proposition~\ref{prop-bornology-comparison}, for any $C' > 0$ the set $Z_{C'} \coloneqq \{g \in G(L) \mid d(p_0,\iota(g)) < C' \}$ is a bounded set and for any bounded $Z \subset G(L)$ the constant $c_Z \coloneqq \sup \{d(p_0,\iota(y)) \mid y \in Z \}$ is finite.
 
The following lemma translates the results of \cite{CornutNicole:CrysBuildings} into our terms.

\begin{lemma} \label{lemma translation} Let $G$ be a reductive group over $F$ and $b\in G(L)$. 
\begin{assertionlist}
\item For any $c > 0$ there exists a $C>0$ such that if $x \in \Bcal^e(G,L)$ satisfies $d(x,b\sigma(x)) < c$ then there exists $x_0 \in \Bcal^e(J_b,F)$ with $d(x,x_0) < C$.
\item For any $c > 0$ there exists a $C>0$ such that if $x \in \iota(G(L))$ satisfies $d(x,b\sigma(x)) < c$ then there exists $x_0 \in \iota(J_b(F))$ with $d(x,x_0) < C$.
\end{assertionlist}
\end{lemma}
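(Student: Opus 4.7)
My plan is to derive both statements from the analysis of the $b\sigma$-displacement function $x \mapsto d(x,b\sigma(x))$ carried out in \cite{CornutNicole:CrysBuildings}, combined with the fixed point and cocompactness properties of $J_b(F)$ acting on its own Bruhat-Tits building. The overall strategy is: (a) reduces to a two-stage min-set/fixed-point argument, one stage in $\Bcal^e(G,L)$ for $b\sigma$ and one stage in $\Bcal^e(J_b,L)$ for $\sigma_b$; (b) then follows from (a) by a cocompactness argument for $J_b(F)$ on $\Bcal^e(J_b,F)$.

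For (a) the key point is that $\Bcal^e(J_b,F)$ is essentially the ``min-set'' of the $b\sigma$-displacement on $\Bcal^e(G,L)$. By Proposition~\ref{prop-buildings-levi} and Remark~\ref{rem23b}, $\Bcal^e(J_b,L)$ is identified with the fixed locus of $(s\cdot \nu_b)(O_L^\times)$ inside $\Bcal^e(G,L)$, and on this subspace $b\sigma$ differs from $\sigma_b$ only by translation by $\nu_b$ on the $V_0$-factor. The Cornut-Nicole arguments show that the sublevel set $\{x \mid d(x,b\sigma(x)) < c\}$ is contained in a bounded neighbourhood of $\Bcal^e(J_b,L)$, with the bound depending only on $c$. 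Projecting $x$ to a closest point $x' \in \Bcal^e(J_b,L)$, the triangle inequality together with the translation comparison between $b\sigma$ and $\sigma_b$ yields a uniform bound $d(x',\sigma_b(x')) < c'(c)$. Applying the same kind of argument to the isometry $\sigma_b$ of $\Bcal^e(J_b,L)$, whose fixed-point set is $\Bcal^e(J_b,F)$ by the fixed point theorem, produces $x_0 \in \Bcal^e(J_b,F)$ with $d(x',x_0)$ uniformly bounded; combining the two estimates gives the required $C = C(c)$.

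For (b) I reduce to (a). Given $x \in \iota(G(L))$, first invoke (a) to produce $x_0 \in \Bcal^e(J_b,F)$ with $d(x,x_0)$ bounded. It then suffices to find $j \in J_b(F)$ such that $d(\iota(j),x_0) = d(j.p_0,x_0)$ is uniformly bounded, which follows from cocompactness of the $J_b(F)$-action on $\Bcal^e(J_b,F)$: the non-extended building $\Bcal(J_b,F)$ admits a cocompact $J_b(F)$-action by Bruhat-Tits, while on $V_0(J_b,F)$ the image of $J_b(F)$ under the Kottwitz map is a cocompact lattice. A final triangle inequality then closes the argument. The main obstacle is the first step of (a), namely the quantitative statement that the $b\sigma$-displacement grows coarsely linearly in the distance to $\Bcal^e(J_b,L)$; this is the heart of the Cornut-Nicole displacement analysis in the extended setting, and its application here depends on the decency of $b$ (ensured by Lemma~\ref{rem42}) and on the careful compatibility of the two Frobenius actions supplied by Lemma~\ref{lemma-buildings-Frobenius-equivariance}.
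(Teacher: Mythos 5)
Your overall approach is essentially the same as the paper's: both parts ultimately rest on the Cornut--Nicole displacement analysis for (a) and on cocompactness of $J_b(F)$ on $\Bcal^e(J_b,F)$ (split into the classical building factor and the $V_0$-factor) for (b), which is exactly the paper's argument for (b).

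The one place where you diverge is in how you package (a). The paper invokes \cite[Thm.~3.3]{CornutNicole:CrysBuildings} directly: $f_\ast$ identifies $\Bcal^e(J_b,F)$ with the set $\mathrm{Min}(b\sigma)$ on which $d(x,b\sigma(x))$ attains its minimum, and \cite[Prop.~8]{CornutNicole:CrysBuildings} then gives the quantitative fact that a bounded displacement forces bounded distance to $\mathrm{Min}(b\sigma)$; this is assertion (a) in a single step. You instead propose a two-stage reduction: first descend from $\Bcal^e(G,L)$ to $\Bcal^e(J_b,L)$ using a bound on the distance to the $(s\cdot\nu_b)(O_L^\times)$-fixed locus, then from $\Bcal^e(J_b,L)$ to $\Bcal^e(J_b,F)=\mathrm{Fix}(\sigma_b)$ by a second displacement argument applied to $\sigma_b$. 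This is a valid alternative organisation (the second stage is precisely the $b=1$ case of the same Cornut--Nicole analysis applied to $J_b$ in place of $G$, or a CAT(0) circumcentre argument once $\sigma_b$ is known to have bounded orbits), but it is strictly more work than what the cited Cornut--Nicole theorem already hands you, and the intermediate quantitative estimate to $\Bcal^e(J_b,L)$ is not a statement Cornut--Nicole single out; you would need to extract it. One small point worth flagging: your translation comparison between $b\sigma$ and $\sigma_b$ on the $V_0$-factor uses the translation by the image of $b$ in $V_0(J_b,L)$, not literally by $\nu_b$; this is fine but should be said carefully, and is exactly what Remark~\ref{rem23b} and Lemma~\ref{lemma-buildings-Frobenius-equivariance} make precise.
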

\begin{proof}
Assertion (a) is proven in \cite{CornutNicole:CrysBuildings} by an elegant geometrical argument. By Theorem~3.3 of loc.~cit.\ $f_\ast$ identifies $\Bcal^{e}(J_b,F)$ with the set
\[
 {\rm Min}(b\sigma) \coloneqq \{x \in \Bcal^e(G,L) \mid d(x,b\sigma(x)) \textnormal{ attains its minimal possible value.}\},
\]
Thus the statement (a) claims that if $d(x,b\sigma(x))$ is bounded, so is the distance to ${\rm Min}(b\sigma)$. This (together with an upper bound for $C$) is proven in \cite[Prop.~8]{CornutNicole:CrysBuildings}.

To show that (a) implies (b), we have to show that the distance of a point $x \in \Bcal^e(G,L)$ to $\iota(G(L))$ is bounded above, or equivalently that there exists a bounded subset $M \subset \Bcal^{e}(G,L)$ such that $G(L)\cdot M = \Bcal^{e}(G,L)$ as well as the analogous assertion for $J_b(F)$. For this, we fix an isomorphism $X_\ast(Z)^I \cong \ZZ^r$, which yields an identification $V_0(G,L) = \RR^r$. Then we may choose $M = \afr \times [0,1]^r$, where $\afr$ is any alcove of the usual Bruhat-Tits building $\Bcal(G,L)$.
\end{proof}

\begin{proof}[Proof of Proposition \ref{prop Jb-orbits}]
Let $Z \subset G(L)$ be bounded. We fix $g \in \Xtilde_Z(b)$ and denote $ x \coloneqq \iota(g)$. Then
 \[
  d(x,b\sigma(x)) = d(g^{-1}.x,g^{-1}.b\sigma(x)) = d(p_0,g^{-1}b\sigma(g).p_0) < c_Z.
 \]
 By Lemma \ref{lemma translation}(b), there exist a $C_Z > 0$ depending only on $Z$ and a $j \in J_b(F)$ such that
 \[
  d((j^{-1}\cdot g).p_0,p_0) = d(x, j.p_0) < C_Z,
 \]
 i.e.\ $j^{-1} \cdot g \in Z_{C_Z}$. Hence $\Xtilde_Z(b) = J_b(F) \cdot (\Xtilde_Z(b) \cap Z_{C_Z})$.
\end{proof}

It remains to prove the first part of Theorem~\ref{thm finiteness}. By Lemma~\ref{lemma-representability-in-Fl} it is equivalent to the following proposition.
 
 \begin{proposition} \label{prop bdd neighbourhood}
   Every $x_0 \in X_Z(b)(k)$ has a bounded open neighbourhood.
 \end{proposition}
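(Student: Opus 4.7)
The plan is to use Proposition~\ref{prop Jb-orbits} together with the properness of the $J_b(F)$-action on $\Flag_\Gscr$ to locally present $X_Z(b)$ near $x_0$ as a finite union of $J_b(F)$-translates of a bounded subset. First, since the left $J_b(F)$-action on $\Flag_\Gscr$ consists of automorphisms that preserve both $X_Z(b)$ and the bornology, it suffices to exhibit a bounded open neighbourhood of $j^{-1} \cdot x_0$ for some $j \in J_b(F)$; by Proposition~\ref{prop Jb-orbits} I may choose such a $j$ so that $g_0 \in Z_{C_Z}$, i.e.\ $d(\iota(g_0),p_0)<C_Z$.

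Let $K := J_b(F) \cap g_0 P g_0^{-1}$, the stabiliser of $x_0$ in $J_b(F)$; this is a compact open subgroup, since the parahoric $g_0 P g_0^{-1}$ is open in $G(L)$ and $J_b(F)$ is closed in $G(L)$. Let $X_0 \subset \Flag_\Gscr$ be the closure of $K \cdot (\tilde X_Z(b) \cap Z_{C_Z}) P/P$: it is bounded, closed, and $K$-invariant, and by Proposition~\ref{prop Jb-orbits} every $x \in X_Z(b)$ has the form $x = jy$ for some $j \in J_b(F)$ and $y \in X_0$. Fix $w_1 \in \widetilde W$ with $X_0 \subset S_{\leq w_1}$; in particular $x_0 \in X_0 \subset S_{\leq w_1}$. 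The key technical input is the finiteness statement: \emph{for every $w \in \widetilde W$, the set $J^w := \{jK \in J_b(F)/K : j X_0 \cap S_{\leq w} \neq \emptyset\}$ is finite.} Indeed, lifting $X_0$ and $S_{\leq w}$ to bounded subsets of $G(L)$, any such $j$ lies in a fixed bounded subset of $G(L)$; by Proposition~\ref{prop-bornology-comparison} applied to $J_b$, together with the embedding $\Bcal^e(J_b,F) \hookrightarrow \Bcal^e(G,L)$ from Proposition~\ref{prop-buildings-levi} and the standard properness of the $J_b(F)$-action on its Bruhat-Tits building, this forces $j$ into a compact subset of $J_b(F)$. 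Since $K$ is open, the quotient of a compact set by $K$ is finite.

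With this in hand, set
\[
 B := \bigcup_{jK \in J^{w_1}} j X_0, \qquad C := \bigcup_{jK \notin J^{w_1}} j X_0, \qquad U := \Flag_\Gscr \setminus C.
\]
Then $C$ is closed in $\Flag_\Gscr$: for each $w$ the intersection $C \cap S_{\leq w}$ is a union over $jK \in J^w \setminus J^{w_1}$ (a finite set) of closed subsets, hence closed in $S_{\leq w}$. For $jK \notin J^{w_1}$ the definition gives $j X_0 \cap S_{\leq w_1} = \emptyset$, and $x_0 \in S_{\leq w_1}$ then forces $x_0 \notin C$, so $x_0 \in U$. Finally, for $x = jy \in X_Z(b)$ with $y \in X_0$: either $jK \notin J^{w_1}$, in which case $x \in C$ and $x \notin U$; or $jK \in J^{w_1}$, in which case $x \in B$. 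Consequently $U \cap X_Z(b) \subseteq B$ is a bounded open neighbourhood of $x_0$ in $X_Z(b)$. The main obstacle is the finiteness assertion for $J^w$, which requires combining the bornology comparison from \S\ref{sec2} with the properness of the $J_b(F)$-action on its Bruhat-Tits building via the building embedding.
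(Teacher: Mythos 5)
Your proof is correct in its essential structure, but it follows a genuinely different route from the paper's. The paper restricts to a connected component $X_Z(b)^\omega$, equips it with a $\sigma$-invariant semi-metric $d$ defined via the Bruhat order, descends $X_Z(b)^\omega$ to a model $\Mcal$ over a finite field $k_s$ (using decency of $b$), and deduces from Proposition~\ref{prop Jb-orbits} that every point lies within a fixed distance $N_0$ of a $k_s$-rational point; the desired open set is then $\Mcal_n(x_0)$ with the balls around ``far'' rational points removed. The key finiteness input there is that $\Mcal_{N_0+n}$ is of finite type over $k_s$, so has only finitely many $k_s$-rational points. You instead never descend to a finite field: you work directly with the compact open stabiliser $K = \Stab_{J_b(F)}(x_0)$ and show that only finitely many cosets $jK$ translate a fixed bounded closed $K$-invariant $X_0$ into a given Schubert variety; the finiteness comes from boundedness-implies-compactness for subsets of $J_b(F)$, which you derive from Proposition~\ref{prop-bornology-comparison} together with the isometric embedding $\Bcal^e(J_b,F)\hookrightarrow\Bcal^e(G,L)$ of Proposition~\ref{prop-buildings-levi}. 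Both routes hinge on Proposition~\ref{prop Jb-orbits} and on the building-theoretic comparison of bornologies from Section~\ref{sec2}; yours replaces the paper's counting of rational points of a finite-type scheme by a properness argument for the $J_b(F)$-action on $\Bcal^e(J_b,F)$, which is somewhat more structural and avoids the descent to $k_s$ and the semi-metric $d$ entirely.

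Two small points worth tightening. First, a single $w_1$ with $X_0\subset S_{w_1}$ may not exist if $X_0$ meets several connected components of $\Flag_\Gscr$ (and, more cautiously, requires directedness of the Bruhat order on the $\Wtilde^P$-double cosets); the remedy is either to replace $X_0$ and $X_Z(b)$ by their intersections with the connected component $\Flag_\Gscr^\omega$ of $x_0$, as the paper does, or simply to run your finiteness argument with $S_{w_1}$ replaced by the finite union of Schubert varieties containing $X_0$, which changes nothing. Second, the step ``$j$ lies in a fixed bounded subset of $G(L)$, hence in a compact subset of $J_b(F)$'' is correct but is really where Lemma~\ref{lemma-buildings-Frobenius-equivariance} and Remark~\ref{rem23b} are used: one needs the embedding of buildings to be $J_b(F)$-equivariant and to send the bornology of $J_b(F)$ to the restricted bornology of $\Bcal^e(G,L)$, and it is this compatibility---not abstract properness alone---that closes the argument. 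Since the paper has already set this up, your appeal is legitimate, but stating the dependence explicitly would make the proof self-contained.
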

 \begin{proof}
  The proof follows by an analogous argument as the last part of \cite[Thm.~6.3]{HartlViehmann:Newton}. Since the situation simplifies a lot by considering only the reduced structure, and since in loc.~cit.~only split groups, hyperspecial $P$, and certain $Z$ are considered, we give the complete proof for the reader's convenience.   
  
 Let $\omega = w_G(x_0)$ and let again $X_Z(b)^\omega=X_Z(b)\cap w_G^{-1}(\omega)$. Since $X_Z(b)^\omega \subset X_Z(b)$ is open and closed, it suffices to prove the claim for $X_Z(b)^\omega$. We can define a $G(L)$- and $\sigma$-invariant semi-metric $d\colon G(L)^\omega \to \NN \cup \{0\}$ by 
 \[
  d(g,h) \leq n \quad\Longleftrightarrow\quad h^{-1}g \in \overline {P \rho^{\vee}(\unif^{2n}) P} = \bigcup_{w \leq 2n\rho^{\vee}} PwP
 \] where $\rho^{\vee}$ denotes the half-sum of the positive coroots and $w\in  \Wtilde^P \backslash \Wtilde / \Wtilde^P$.
 Obviously this semi-metric descends to $\Flag_G^\omega$. Then a subset is bounded with respect to the bornology defined before Lemma~\ref{lemma-bornology-on-Fl} if and only if it is bounded with respect to $d$.
 
We choose $b$ as in Lemma~\ref{rem42}. Let $s \in \NN$ be as in the decency equation, i.e. $ (b\sigma)^s = (s\cdot\nu)(\unif)$. Enlarging $s$ and $Z$ if necessary, we assume that $\omega$ and $Z$ are both $\sigma^s$-invariant. Then $X_Z(b)^\omega$ is $\sigma^s$-stable and thus is defined over the extension $k_s$ of degree $s$ of $k_F$. The closed point $x_0$ defined over some finite extension of $k_F$. By enlarging $k_s$ further if necessary, we assume that $x_0$ is a $k_s$-rational point. We denote by $\Mcal$ the model of $X_Z(b)^\omega$ over $k_s$ and for every $n \in \NN$ we define the closed sub-ind-scheme
  \[
   \Mcal_n(k) \coloneqq  \{x \in \Mcal(k) \mid d(x,x_0) \leq n \}.
  \]
  Note that $\Mcal_n$ is actually a (perfectly) finite type scheme by Lemma~\ref{lemma-representability-in-Fl} and moreover defined over $k_s$ since $d$ is $\sigma$-invariant. Also note that
  $
   \Mcal = \varinjlim \Mcal_n.
  $
  
   The decency of $b$ implies that $J_b(F) \subset G(F_s)$ where $F_s$ is the unramified extension of $F$ of degree $s$. Thus the $J_b(F)^0$-action stabilises $\Mcal(k_s)$. Together with  Proposition~\ref{prop Jb-orbits} (which we proved independently of the first assertion of Theorem~\ref{thm finiteness}) we obtain that there exists an $N_0 \in \NN$ such that for every $x \in \Mcal(k)$ there exists a $y_0 \in \Mcal(k_s)$ with $d(x,y_0) \leq N_0$. For every $y_0 \in \Mcal(k_s)$ define the closed subscheme $\Mcal_n(y_0) \subset \Mcal_n$ by
   \[
    \Mcal_n(y_0)(k) \coloneqq \{y \in \Mcal_n(k) \mid d(y_0,y) \leq N_0 \}.
   \]
  Now consider the open subset of $\Mcal_n$
  \[
   U_n \coloneqq \Mcal_n(x_0) \setminus \bigcup_{y \in \Mcal(k_s) \atop d(x_0,y) > N_0} \Mcal_n(y).
  \]
  The union on the right hand side is indeed finite (and hence closed): By the triangular inequality $\Mcal_n(y)$ is empty unless $y \in \Mcal_{N_0+n}(k_s)$; the latter set is finite since $\Mcal_{N_0+n}$ is (perfectly) of finite type. We claim that the chain $U_1 \subseteq U_2 \subseteq \dotsm$ stabilises at $U_{2N_0}$ at the latest. To prove this, let $x \in U_n(k)$ for some $n$. We choose a rational point $y_0 \in \Mcal(k_s)$ with $d(x,y_0) \leq N_0$. By definition of $U_n$, we must have $d(x_0,y_0) \leq N_0$. Thus $d(x,x_0) \leq d(x,y_0) + d(y_0,x_0) \leq 2N_0$, i.e.\ $x \in U_{2N_0}$.
  
  Since $\Mcal = \varinjlim \Mcal_n$, the subset $U_{2N_0} = \varinjlim U_n$ is open in $\Mcal$. It is moreover bounded and contains $x_0$. It is thus a bounded open neighbourhood of $x_0$.
 \end{proof}

\def\cprime{$'$}
\providecommand{\bysame}{\leavevmode\hbox to3em{\hrulefill}\thinspace}
\providecommand{\MR}{\relax\ifhmode\unskip\space\fi MR }
\providecommand{\MRhref}[2]{%
  \href{http://www.ams.org/mathscinet-getitem?mr=#1}{#2}
}
\providecommand{\href}[2]{#2}

\end{document}